\def\kk{{\Bbbk}}
\def\Lc{{\mathcal{L}}}
\def\B'c{{\mathcal{B'}}}
\def\U'c{{\mathcal{U'}}}
\def\Hc{{\mathcal{H}}}
\def\Ec{{\mathcal{E}}}
\def\Fc{{\mathcal{F}}}
\def\xb{{\bold x}}
\def\opn#1#2{\def#1{\operatorname{#2}}} 
\opn\chara{char}
\opn\length{\ell}
\opn\projdim{proj\,dim}
\opn\injdim{inj\,dim}
\opn\ini{in}
\opn\rank{rank}
\opn\height{ht}
\opn\Tiefe{Tiefe}
\opn\grade{grade}
\opn\height{ht}
\opn\embdim{emb\,dim}
\opn\codim{codim}
\opn\Tr{Tr}
\opn\bigrank{big\,rank}
\opn\superheight{superheight}\opn\lcm{lcm}
\opn\trdeg{tr\,deg}%
\opn\reg{reg}
\opn\lreg{lreg}
\opn\deg{deg}
\opn\lcm{lcm}
\opn\set{set}
\opn\ara{ara}
\opn\bight{bight}
\opn\div{div}
\opn\Div{Div}
\opn\cl{cl}
\opn\Cl{Cl}
\opn\Spec{Spec}
\opn\Supp{Supp}
\opn\supp{supp}
\opn\Sing{Sing}
\opn\Ass{Ass}
\opn\Min{Min}
\opn\Ann{Ann}
\opn\Rad{Rad}
\opn\Soc{Soc}
\opn\Ker{Ker}
\opn\Coker{Coker}
\opn\Im{Im}
\opn\Hom{Hom}
\opn\Tor{Tor}
\opn\Ext{Ext}
\opn\End{End}
\opn\Aut{Aut}
\opn\id{id}
\opn\nat{nat}
\opn\GL{GL}
\opn\SL{SL}
\opn\mod{mod}
\opn\ord{ord}
\opn\depth{depth}
\opn\set{set}
\opn\Shad{Shad}
\opn\pd{pd}
\opn\aff{aff}
\opn\con{conv}
\opn\relint{relint}
\opn\st{st}
\opn\lk{lk}
\opn\cn{cn}
\opn\core{core}
\opn\vol{vol}
\opn\gr{gr}
\opn\d{d}
\opn\Ind{Ind}
\opn{\indmat}{indmat}
\newcommand{\diam}{\mbox{\rm{diam}}}
\newcommand{\syz}{\mbox{\rm{syz}}}
\def\pot#1#2{#1[\kern-0.28ex[#2]\kern-0.28ex]}
\opn\dirlim{\underrightarrow{\lim}}
\opn\invlim{\underleftarrow{\lim}}
\def\pnt{{\raise0.5mm\hbox{\large\bf.}}}
\def\Implies{\ifmmode\Longrightarrow \else
     \unskip${}\Longrightarrow{}$\ignorespaces\fi}
\def\implies{\ifmmode\Rightarrow \else
     \unskip${}\Rightarrow{}$\ignorespaces\fi}
\def\iff{\ifmmode\Longleftrightarrow \else
     \unskip${}\Longleftrightarrow{}$\ignorespaces\fi}
\newtheorem{Theorem}{Theorem}[section]
\newtheorem{Lemma}[Theorem]{Lemma}
\newtheorem{Corollary}[Theorem]{Corollary}
\newtheorem{Proposition}[Theorem]{Proposition}
\newtheorem{Remark}[Theorem]{Remark}
\newtheorem{Example}[Theorem]{Example}
\newtheorem{Definition}[Theorem]{Definition}
\newtheorem{Problem}[Theorem]{Problem}
\let\epsilon=\varepsilon
\let\kappa=\varkappa
\title{Line Graphs of Simplicial Complexes}
\author{Anda Olteanu}
\address{Faculty of Marine Engineering, ``Mircea cel B\u atr\^an" Naval Academy, Fulgerului Street, no. 1,
	900218 Constanta, Romania,} \email{olteanuandageorgiana@gmail.com}
\begin{document}
	
	\maketitle
	\begin{abstract}  We consider the line graph of a pure simplicial complex. We prove that, as in the case of line graphs of simple graphs, one can compute the second graded Betti number of the facet ideal of a pure simplicial complex in terms of the combinatorial structure of its line graph. We characterize those pure simplicial complexes whose line graph is a complete (bipartite) graph. We give conditions that line graphs of simplicial complexes should fulfill.
	\end{abstract}

	\section*{Introduction}
	
	Defined by Whitney \cite{Wh} line graphs have been intensively studied in graph theory. This concept has been introduced under different names by many authors but the term ‘line graph’ was later introduced by Hoffmann \cite{Ho}. A characterization of those graphs which are line graphs of some graph can be found for instance in \cite{B} and \cite{HN}. Due to their properties, a lot of work has been done for generalizing them for hypergraphs, \cite{BHS,LT, TZ}. For instance, Tyshkevich and Zverovich defined the line graph of a hypergraph $\Hc$ as the graph whose vertex set is the edge set of $\Hc$, and two vertices are adjacent in $L(\Hc)$ if the corresponding edges are adjacent or intersecting edges in $\Hc$, \cite{TZ}. Moreover, in \cite{BHS} Bermond, Heydemann, and Sotteau considered the $k$-line graph of a hypergraph as being the graph with the vertex set given by the set of edges of the hypergraph and two vertices are adjacent if the intersection of the corresponding edges of $\Hc$ has at least $k$ elements.
	
	From commutative algebra point of view, line graphs of simple graphs appear in the computation of the second graded Betti number of its edge ideal as Eliahou and Villarreal proved \cite{EV}. We are mainly interested in simplicial complexes and their facet ideals, which can be viewed as edge ideals of hypergraphs. Therefore we consider pure simplicial complexes of dimension $d-1$, $d\geq2$, and we use the definition of $k$-line graphs given in \cite{BHS}, with $k=d-1$. Since this is the only graph that we consider, we will call it the line graph of the simplicial complex. Under this definition, line graphs appears also in the study of polytopes as \textit{facet-ridge incidence graphs}. For instance, Blind and Mani proved that simplicial polytopes are completely determined by their facet-ridge graphs \cite{BM}.
	
	For line graphs of simplicial complexes we prove several results which are similar to the case of simple graphs. We give a similar result for the second graded Betti number of the facet ideal of a pure simplicial complex $\Delta$ [Theorem~\ref{Bettisc}] and, as a consequence, we obtain a connection between the second Betti number of the Stanley-Reisner ideal of the Alexander dual, $S/I_{\Delta^{\vee}}$ and the line graph of $\Delta$, [Corollary ~\ref{Bettiscaldual}]. We consider the particular class of complete graphs $\mathcal{K}_n$, $n\geq3$, and we characterize all the pure simplicial complexes whose line graph is $\mathcal{K}_n$ [Proposition \ref{complete}]. Moreover we characterize the complete bipartite graphs which are line graphs of some simplicial complex [Theorem \ref{complete-bipartite-mn}]. We pay also attention to general properties of line graphs of simplicial complexes such as the connectivity, the number of edges, the behaviour on deletion of facets or addition of vertices. We also determine classes of forbidden subgraphs of the line graphs (Proposition \ref{1,d+1}, Theorem \ref{complete-bipartite-mn}).There is a complete characterizations of those graphs which are line graphs of some graph (Theorem \ref{LG}). Taking into account this result, we give a sufficient condition for line graphs of simplicial complexes (Theorem \ref{charlinesc}). Moreover, we construct classes of graphs for which it is also necessary.  
	
	The paper is structured in eight sections. In the first section we recall basic notions of simplicial complexes, hypergraphs, graphs and edge ideals. The second section is devoted to defining line graphs of pure simplicial complexes and studying combinatorial properties such as being connected or a formula for the number of edges are determined.
	The behaviour of line graphs and simplicial complexes on adding or deleting vertices/facets is studied.
	
	The third and fourth sections are devoted to particular classes of graphs such as cycles, complete graphs and complete bipartite graphs. For complete (bipartite graphs) we give full descriptions of the corresponding simplicial complexes. For the case of cycles, we only construct classes of simplicial complexes for which the line graph is a cycle Proposition \ref{cycle}, and we give full characterizations for $C_3$ and $C_4$ [Corollaries \ref{C3} and \ref{C4}].
	
	In the next section we pay attention to a particular class of graphs which appears as a forbidden class for intersection graphs of linear $d$-uniform hypergraphs. We show that there are pure simplicial complexes with the line graph from this class Theorem~\ref{G1t} 
	
	Section six is devoted to generalizing result which hold for line graphs of graphs [Theorem ~\ref{LG}]. We obtain a sufficient condition for line graphs of simplicial complexes [Theorem \ref{charlinesc}] and we prove that it is not necessary. nevertheless, we determine classes of graphs for which the condition is also necessary Theorems \ref{triangle-wheel} and \ref{wheel}.
	
	The next section is devoted to applications to the resolutions of facet ideals of pure simplicial complexes. More precisely, we prove Theorem~\ref{Bettisc} which is similar to the one given by Eliahou and Villarreal \cite[Proposition 2.1]{EV}. By using properties of line graphs, we also relate the Stanley-Reisner ideal of the Alexander dual to line graphs [Corollary \ref{Bettiscaldual}].
	
	The last section is devoted to final conclusions and remarks. We pointed out here some problems which arise from our computations and that are of interest in understanding line graphs of simplicial complexes.
	
	\section{Background}
	In this section we recall the notions and properties that will be used later. For more details, one may see \cite{BrHe,HaTu,HaTu1,HeHi,MV,Vi}.
	\subsection{Simplicial complexes}
	\textit{A simplicial complex} $\Delta$ on the vertex set $\{x_1,\ldots, x_n\}$, where $n\geq1$ is an integer, is a collection of subsets (called \textit{faces}) such that any vertex is in $\Delta$ and, if $F$ is a face of $ \Delta$ and $G\subset F$, then $G$ is also a face of $\Delta$. Maximal faces (with respect to the inclusion) are called \textit{facets} and their set is denoted by $\Fc(\Delta)$. Moreover, if $\Fc(\Delta)=\{F_1,\ldots,F_r\}$, then $\Delta=\langle F_1,\ldots, F_r\rangle$ is just another way to write the simplicial complex with facets $\Fc(\Delta)$.  \textit{A simplex} is a simplicial complex with only one facet. The \textit{dimension} of the simplicial complex is denoted by $\dim(\Delta)$ and is defined as $\dim(\Delta)=\max\{\#(F)-1:F\in\Delta\}$. A simplicial complex is \textit{pure} if all its facets have the same dimension and a $d$-simplex si a simplex of dimension $d$, that is $\langle\{x_1,\ldots,x_{d+1}\}\rangle$.
If $\Delta_1$ and $\Delta_2$ be simplicial complexes with disjoint sets of vertices $V_1$ and $V_2$, \textit{the join} of $\Delta_1$ and $\Delta_2$ is the simplicial complex $$\Delta_1*\Delta_2=\{F\cup G: F\in\Delta_1,\ G\in \Delta_2\}.$$
	
	\textit{The Alexander dual} of a simplicial complex $\Delta$, denoted by $\Delta^{\vee}$, is the simplicial complex with the faces given by the complementary of non-faces of $\Delta$, that is $$\Delta^{\vee}=\{F^c:F\notin\Delta\}.$$
	
	For a simplicial complex $\Delta$, let $\Delta^c$ be the simplicial complex with the facet set $$\Fc(\Delta^c)=\{F^c:F\in\Fc(\Delta)\}.$$
	
	A simplicial complex is called \textit{shellable} if there is an ordering of its facets $F_1,\ldots,F_r$ such that for all $i,j$ with $1\leq j<i\leq r$, there exist a vertex $x\in F_i\setminus F_j$ and an integer $k<i$ such that $F_i\setminus F_k=\{x\}$.

	\subsection{Squarefree monomial ideals associated to simplicial complexes}
	Let $\Delta$ be a simplicial complex on the vertex set $V=\{u_1,\ldots, u_n\}$ and $\kk$ a field. Let $S=\kk[x_1,\ldots,x_n]$ be the polynomial ring in $n$ variables over the field $\kk$. To a set $F=\{i_1,\ldots,i_t\}\subseteq V$, one may associate the squarefree monomial $\xb_{F}=x_{i_1}\cdots x_{i_t}\in S $ and we will refer to $F$ as\textit{ the support of the monomial} $\xb_F$. 
	
	For the simplicial complex $\Delta$ two squarefree monomial ideals are of interest:\begin{itemize}
		\item \textit{the Stanley--Reisner ideal} $I_{\Delta}$ which is generated by the squarefree monomials which correspond to the minimal non-faces of $\Delta$,
		$$I_\Delta=(\xb_F:F\notin\Delta)$$
		\item \textit{the facet ideal} $I(\Delta)$ which is generated by the squarefree monomials which correspond to the facets of $\Delta$,
		$$I(\Delta)=(\xb_F:F\in\Fc(\Delta))$$
	\end{itemize}
	We will write $\kk[\Delta]$ for \textit{the Stanley--Reisner ring of} $\Delta$, that is $\kk[\Delta]=S/I_{\Delta}$
	
	If we consider the Stanley--Reisner ideal of the Alexander dual of $\Delta$, then one has the following result:
	\begin{Proposition}\cite[Lemma 1.2]{HeHiZh}\label{Aldualcomp} If $\Delta$ is a simplicial complex, then
	$I_{\Delta^{\vee}}=I(\Delta^c)$ 
	\end{Proposition} 
		
	Let $I\subseteq S=\kk[x_1,\ldots,x_n]$ be an ideal and $\mathcal{F}$ is the minimal graded free resolution of $S/I$ as an $S$-module:
	\[\mathcal{F}: 0\rightarrow\bigoplus\limits_jS(-j)^{\beta_{pj}}\rightarrow\cdots\rightarrow\bigoplus\limits_j S(-j)^{\beta_{1j}}\rightarrow S\rightarrow S/I\rightarrow0,\] then the numbers $\beta_{ij}$ are \textit{the graded Betti numbers of $S/I$}.
	Let $d>0$ be an integer. An ideal $I$ of $S$ \textit{has a $d$--linear resolution} if the minimal graded free resolution of $S/I$ is of the form
	\[\ldots\longrightarrow S(-d-2)^{\beta_2}\longrightarrow S(-d-1)^{\beta_2}\longrightarrow S(-d)^{\beta_1}\longrightarrow S\longrightarrow S/I\longrightarrow 0.
	\]  If $d=2$, we simply say that the ideal has \textit{a linear resolution}.
	
	In between the combinatorics of simplicial complexes and the homological pro\-perties of the associate squarefree monomial ideals there are strong connections.
	\begin{Theorem}\rm(Eagon--Reiner)\cite{EaRe}\it$\ $ Let $\kk$ be a field and $\Delta$ a simplicial complex. Then $\kk[\Delta]$ is Cohen--Macaulay if and only if $I_{\Delta^{\vee}}$ has a linear resolution.
	\end{Theorem}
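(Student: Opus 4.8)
The plan is to convert both sides of the equivalence into vanishing statements about the graded Betti numbers of $\kk[\Delta^\vee]=S/I_{\Delta^\vee}$ and then to read them off from the reduced homology of $\Delta$ itself, using Hochster's formula, Reisner's criterion, and combinatorial Alexander duality. Write $S=\kk[x_1,\ldots,x_n]$. A first observation sets the degree: the minimal non-faces of $\Delta^\vee$ are exactly the complements of the facets of $\Delta$, so $I_{\Delta^\vee}=I(\Delta^c)$ is minimally generated in a single degree if and only if $\Delta$ is pure, and in that case every generator has degree $d=n-\dim\Delta-1=\height I_\Delta$. Since a Cohen--Macaulay complex is pure, the purity needed for a linear resolution is automatic on one side and forced by single-degree generation on the other.

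First I would recall the two homological dictionaries. Reisner's criterion states that $\kk[\Delta]$ is Cohen--Macaulay precisely when $\tilde H_i(\lk_\Delta\sigma;\kk)=0$ for every face $\sigma\in\Delta$ and every $i<\dim\lk_\Delta\sigma$. Hochster's formula expresses the Betti numbers of a Stanley--Reisner ring through reduced homology of induced subcomplexes,
\[
\beta_{i,j}(\kk[\Delta^\vee])=\sum_{\substack{W\subseteq[n]\\ |W|=j}}\dim_\kk\tilde H_{j-i-1}\big((\Delta^\vee)_W;\kk\big),
\]
so that a $d$-linear resolution of $I_{\Delta^\vee}$, that is $\beta_{i,j}(\kk[\Delta^\vee])=0$ whenever $i\geq1$ and $j\neq i+d-1$, translates into the vanishing of almost all of the groups $\tilde H_\ast\big((\Delta^\vee)_W;\kk\big)$.

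Second, I would build the bridge between the two complexes. The key combinatorial identity is that, for $W\subseteq[n]$, the restriction $(\Delta^\vee)_W$ is the Alexander dual, taken inside the vertex set $W$, of the link $\lk_\Delta([n]\setminus W)$; this follows by unwinding the definitions, since $G\subseteq W$ lies in $(\Delta^\vee)_W$ iff $([n]\setminus W)\cup(W\setminus G)\notin\Delta$. Applying Alexander duality inside $W$,
\[
\tilde H_i\big((\Delta^\vee)_W;\kk\big)\;\cong\;\tilde H^{\,|W|-i-3}\big(\lk_\Delta([n]\setminus W);\kk\big),
\]
rewrites every homology group occurring in Hochster's formula as reduced cohomology of a link in $\Delta$. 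Substituting this into the vanishing condition of the previous paragraph aligns it, degree by degree, with the vanishing of $\tilde H_\ast(\lk_\Delta\sigma;\kk)$ below the top dimension, i.e.\ with Reisner's criterion; because Hochster's formula and Reisner's criterion are both equivalences and Alexander duality is an isomorphism, the implications run in both directions and yield the theorem.

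The main obstacle is precisely the index matching in this last step: one must check that the single permitted internal degree $j=i+d-1$ of the linear strand corresponds, after the codimension shift in Alexander duality and the reindexing $\sigma\leftrightarrow[n]\setminus W$, exactly to homology sitting strictly below $\dim\lk_\Delta\sigma$, and one must treat the boundary cases carefully, namely the empty face and the void versus empty-face complexes that arise when $[n]\setminus W\notin\Delta$. Once this bookkeeping is pinned down, the two vanishing ranges coincide and the equivalence follows; as a by-product one recovers the reformulation $\reg I_{\Delta^\vee}=\projdim\kk[\Delta]$, whose combination with the Auslander--Buchsbaum relation gives a second, equally direct route to the statement.
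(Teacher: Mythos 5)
The paper does not prove this theorem: it is quoted as background and attributed to Eagon and Reiner, so the only meaningful comparison is with the argument of the cited source, and your proposal is essentially that standard argument, correctly assembled. Your three ingredients are all right: Hochster's formula, Reisner's criterion, and the bridge consisting of the identity that $(\Delta^\vee)_W$ is the Alexander dual, taken inside $W$, of $\lk_\Delta([n]\setminus W)$, followed by combinatorial Alexander duality. The bookkeeping you defer does close up: substituting the duality isomorphism into Hochster's formula gives $\beta_{i,j}(\kk[\Delta^\vee])=\sum_{|W|=j}\dim_\kk\tilde{H}^{\,i-2}\bigl(\lk_\Delta([n]\setminus W);\kk\bigr)$, so the linearity constraint $j-i=n-\dim\Delta-2$ permits nonvanishing cohomology of the link $\lk_\Delta([n]\setminus W)$ only in degree $\dim\Delta-|[n]\setminus W|$; since both Cohen--Macaulayness and generation in a single degree force $\Delta$ to be pure, and links of faces in a pure complex are pure of exactly the complementary dimension, forbidding cohomology away from that degree is (using that cohomology above the dimension vanishes automatically) precisely Reisner's criterion. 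The boundary case you flag is also harmless: if $[n]\setminus W\notin\Delta$, then $(\Delta^\vee)_W$ is the full simplex on $W$ and contributes nothing to Hochster's sum, matching the fact that Reisner's criterion quantifies only over faces of $\Delta$. So your proposal is a sound reconstruction of the proof that the paper outsources to its reference.
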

	We recall that a simplicial complex is Cohen--Macaulay if its Stanley--Reisner ring has this property.
	\begin{Definition}\rm\cite{HeTa}
		A monomial ideal $I$ of $S$ is called an \textit{ideal with linear quotients} \rm if there is an ordering of its minimal monomial set of generators $m_1,\ldots, m_r$ satisfying the following property: for all $\ 2\leq i\leq r$ and for all $j<i$, there exist $l$ and $k$, $l\in\{1,\ldots,n\}$ and $k<i$, such that $m_k/\gcd(m_k,m_i)=x_l$ and $x_l$ divides $m_j/\gcd(m_j,m_i)$. 
	\end{Definition}

	\begin{Theorem}\cite[Theorem 1.4]{HeHiZh}\label{shell}
	 Let $\kk$ be a field and $\Delta$ a pure simplicial complex. Then $\Delta$ is shellable if and only if $I_{\Delta^{\vee}}$ has linear quotients.
	\end{Theorem}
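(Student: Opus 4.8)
The statement is the Herzog--Hibi--Zheng duality between shellability and linear quotients, so the plan is to translate the linear-quotients condition on $I_{\Delta^{\vee}}$ verbatim into the shelling condition on $\Delta$, using the Alexander-dual description recalled above. First I would invoke the identity $I_{\Delta^{\vee}}=I(\Delta^c)$ from \cite[Lemma 1.2]{HeHiZh}, so that the minimal generators of $I_{\Delta^{\vee}}$ are the squarefree monomials $u=\xb_{F^c}$ with $F\in\Fc(\Delta)$. Since facets are maximal, $G\subseteq F$ forces $G=F$ for facets $F,G$, hence no $F^c$ divides another $G^c$; thus the assignment $F\mapsto\xb_{F^c}$ is a bijection from $\Fc(\Delta)$ onto the minimal monomial generating set of $I_{\Delta^{\vee}}$, and orderings of facets correspond bijectively to orderings of generators. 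Writing $\Fc(\Delta)=\{F_1,\dots,F_r\}$ and $u_i=\xb_{F_i^c}$, the whole proof reduces to matching the two existence conditions under this bijection.

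The key computation is the following $\gcd$ identity, which I would establish by comparing supports: for $i\neq j$,
\[
u_j/\gcd(u_i,u_j)=\xb_{\,F_j^c\setminus F_i^c}=\xb_{\,F_i\setminus F_j},
\]
because $x\in F_j^c\setminus F_i^c$ means exactly $x\in F_i$ and $x\notin F_j$. With this in hand the two clauses of the linear-quotients definition translate directly. The clause ``$u_k/\gcd(u_k,u_i)=x_l$'' becomes $\xb_{F_i\setminus F_k}=x_l$, i.e.\ $F_i\setminus F_k=\{l\}$; and the clause ``$x_l$ divides $u_j/\gcd(u_j,u_i)$'' becomes $x_l\mid\xb_{F_i\setminus F_j}$, i.e.\ $l\in F_i\setminus F_j$. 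Setting $v=l$, these are precisely the two requirements in the definition of shellability: the existence, for every $j<i$, of a vertex $v\in F_i\setminus F_j$ and an index $k<i$ with $F_i\setminus F_k=\{v\}$.

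Consequently, a given ordering $F_1,\dots,F_r$ is a shelling of $\Delta$ if and only if the corresponding ordering $u_1,\dots,u_r$ realizes linear quotients for $I_{\Delta^{\vee}}$, and since both notions merely assert the existence of some such ordering, the equivalence $\Delta$ shellable $\iff I_{\Delta^{\vee}}$ has linear quotients follows immediately. The role of purity is exactly to make the $u_i$ equigenerated (all of degree $n-d$) so that the facet-to-generator bijection and the single-variable condition $F_i\setminus F_k=\{v\}$ behave uniformly. I do not expect a genuine obstacle here: the entire content is the dictionary, and the only point demanding care is verifying that the minimal generating set of $I(\Delta^c)$ is indeed in bijection with $\Fc(\Delta)$ and that the colon/$\gcd$ computation is performed on the correct complements, so that the two combinatorial conditions coincide on the nose rather than merely up to reindexing.
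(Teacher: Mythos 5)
The paper does not prove this statement at all: it is quoted as background, with the proof deferred to \cite[Theorem 1.4]{HeHiZh}. So there is no internal argument to compare against; what you have written is a self-contained proof of a result the paper only cites. Your proof is correct, and it is in essence the original Herzog--Hibi--Zheng argument: the identity $I_{\Delta^{\vee}}=I(\Delta^c)$, the observation that $\{\xb_{F^c}: F\in\Fc(\Delta)\}$ is an antichain and hence the \emph{minimal} generating set (so orderings of facets correspond exactly to orderings of minimal generators), and the support computation $u_j/\gcd(u_i,u_j)=\xb_{F_i\setminus F_j}$, under which the clause ``$u_k/\gcd(u_k,u_i)=x_l$ and $x_l\mid u_j/\gcd(u_j,u_i)$'' becomes literally ``$F_i\setminus F_k=\{l\}$ and $l\in F_i\setminus F_j$,'' i.e.\ the shelling condition with $v=l$. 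All three steps check out. One small remark: your closing comment overstates the role of purity. With the definitions as given in this paper, the dictionary is purely formal and the equivalence of the two existence statements holds verbatim whether or not $\Delta$ is pure; purity is what makes $I_{\Delta^{\vee}}$ equigenerated (all $u_i$ of degree $n-d$), which matters for downstream consequences such as deducing a linear resolution from linear quotients, not for the equivalence itself.
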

	\subsection{Graphs} Through this paper, all the graphs will be assumed to be simple (without loops or multiple edges). 
	Let $G$ be a finite simple graph with the vertex set $V(G)$ and the set of edges $E(G)$. Two vertices $u,v\in V(G)$ are called \textit{adjacent} if they form an edge in $G$. The neighbourhood of a vertex $u$ of $G$ is denoted by $\mathcal{N}(u)$ and is the set of all the neighbours of $u$, that is $\mathcal{N}(u)=\{v\in V(G)\,:\,\{u,v\}\in E(G)\}$. \textit{The closed neighbourhood of $u$}, denoted by $\mathcal{N}[u]$, is defined as $\mathcal{N}[u]=\mathcal{N}(u)\cup\{u\}$. \textit{The degree of the vertex $u$} is denoted by $\deg (u)$ and is the size of the neighbourhood set of $u$, that is $\deg u=\#(\mathcal{N}(u))$. A graph is called \textit{complete} if any two vertices are adjacent. We denote by $\mathcal{K}_n$ the complete graph with $n$ vertices. Moreover, we denote by $\mathcal{K}_{1,n}$ \textit{the star graph} on $n+1$ vertices, that is the graph with the vertex set $V=\{u,v_1,\ldots,v_n\}$ and the edges $\{u,v_i\}$, $1\leq i\leq n$. More generally,\textit{ the complete bipartite graph}, denoted by $\mathcal{K}_{m,n}$, has the vertex set $U\cup V$, with $U=\{u_1,\ldots,u_m\}$, $V=\{v_1,\ldots,v_n\}$, $U\cap V=\emptyset$ and the set of edges $\{\{u_i,v_j\}: 1\leq i\leq m, 1\leq j\leq n\}$.   
	
	By \textit{a subgraph} $H$ of $G$ we mean a graph with the property that $V(H)\subseteq V(G)$ and $E(H)\subseteq E(G)$. One says that a subgraph $H$ of $G$ is \textit{induced} if whenever $u,v\in V(H)$ so that $\{u,v\}\in E(G)$ then $\{u,v\}\in E(H)$. 
	
	\textit{A path of length }$t\geq2$ in $G$ is, by definition, a set of distinct vertices $u_0,u_1,\ldots,u_t$ such that $\{u_i,u_{i+1}\}$ are edges in $G$ for all $i\in\{0,\ldots,t-1\}$. \textit{The distance between two vertices $u$ and $v$}, denoted by $\d_G(u,v)$, is defined to be the length of a shortest path joining $u$ and $v$. If there is no path joining $u$ and $v$, then $\d_G(u,v)=\infty$. We will drop the subscript when the confusion is unlikely. \textit{The diameter of the graph $G$}, denoted by $\diam(G)$, is the maximum of all the distances between any two vertices in $G$, namely $\diam(G)=\max\{\d(u,v): \ u,v\in V(G)\}.$
	
	\textit{A cycle of length $n\geq3$}, usually denoted by $C_n$, is a graph with the vertex set $\{u_1,\ldots,u_n\}$ and the set of edges $\{u_i,u_{i+1}\}$, $1\leq i\leq i+1$, where $n+1=1$ by convention. A graph is \textit{chordal} if it does not have any induced cycles of length strictly greater than $3$.

	If $G$ is a finite simple graph, \textit{the line graph} of the graph $G$, denoted by $L(G)$, is defined to have as its vertices the edges of $G$, and two vertices in $L(G)$ are adjacent if the corresponding edges in $G$ share a vertex in $G$.

	\subsection{Hypergraphs}
	
	\textit{A hypergraph} $\Hc$ on the vertex set $V$ is a set of subsets of $V$ (called \textit{edges} of $\Hc$) such that if $e_1$ and $e_2$ are distinct edges of $\Hc$ then $e_1 \nsubseteq e_2$. For more details on hypergraphs and relations to simplicial complexes, one may check \cite{MV} for instance. A hypergraph is \textit{$d$-uniform} if every edge has exactly $d$ vertices. 
	
	Through this paper to any simplicial complex $\Delta$ we will associate a hypergraph with the same vertex set as $\Delta$ and with the edges given by the facets of $\Delta$. We denote by $\Hc(\Delta)$ this hypergraph. Since we will not be mainly interested in the structure of the simplicial complex, but more on the combinatorics of the associated hypergraph, we will simply say $\Delta$, but we will understand $\Hc(\Delta)$ whenever the confusion is unlikely. 
	
	\section{The line graph of a simplicial complex}
	In the literature, there are various generalizations of line graphs of graphs to line graphs of hypergraphs.  In \cite{BHS}, the authors defined the notion of $k$-line graph of a hypergraph $\Hc$ as being the graph with the vertex set given by the set of edges of the hypergraph, $\Ec(\Hc)$, and two vertices are adjacent if the intersection of the corresponding edges of $\Hc$ has at least $k$ elements. They denote the $k$-line graph of the hypergraph $\Hc$ by $L_k(\Hc)$.
	
	We will consider the above definition for the case of pure simplicial complexes, where the hypergraph has the vertex set given by the vertex set of the simplicial complex and the edges are the facets. More precisely, let $\Delta$ be a pure simplicial complex of dimension $d-1$, $d\geq2$, on the vertex set $V=\{x_1,\ldots,x_n\}$, with the facet set $\mathcal{F}(\Delta)=\{F_1,\ldots,F_r\}$, $r\geq1$. We will consider $\Hc(\Delta)$ as the hypergraph $\mathcal{H}$. The \textit{$(d-1)$-line graph of} $\Hc(\Delta)$ is the graph with the vertex set given by the facets of $\Delta$ and the set of edges $\{\{F_i,F_j\}:\ \#(F_i\cap F_j)=d-1\}$ (we must have equality due to the fact that simplicial complex is pure of dimension $d-1$). Since this is the only line graph that we will consider through this paper, we will simply refer to it as \textit{the line graph of the simplicial complex $\Delta$} and we will denote it by $\Lc(\Delta)$. 
	
	In order to avoid confusions, we will denote by ${v_1,\ldots,v_r}$ the vertices of $\Lc(\Delta)$, where the vertex $v_i$ corresponds to the facet $F_i$. Moreover, we will denote the edges of the hypergraph $\Hc(\Delta)$ by $\Ec(\Hc(\Delta))$, while the edges of the graph $G$ will be simply denoted by $E(G)$.

	\begin{Remark}\rm
		It is easily seen that the graph $\Lc(\Delta)$ does not depend (up to a relabeling of the vertices) on the labels of the facets of $\Delta$.
	\end{Remark}
	Note that both $\Delta$ and $\Delta^c$ have the same line graph, as the next result shows.
	\begin{Proposition}\label{Deltac}
		If $\Delta$ is a pure simplicial complex of dimension $d-1$ with $n>d+1$ vertices, then $\Lc(\Delta)$ and $\Lc(\Delta^c)$ coincide (up to the labeling of the vertices). 
	\end{Proposition}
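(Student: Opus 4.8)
The plan is to set up the obvious bijection between the vertex sets of $\Lc(\Delta)$ and $\Lc(\Delta^c)$ and then verify that it carries edges to edges. First I would record the two structural facts that make everything work. Since $\Delta$ is pure of dimension $d-1$, each facet $F_i$ satisfies $|F_i|=d$, so each complement $F_i^c=V\setminus F_i$ satisfies $|F_i^c|=n-d$; as the map $F\mapsto F^c$ is injective, the sets $F_1^c,\dots,F_r^c$ are distinct and all of the same cardinality, hence form a clutter, and $\Delta^c$ is pure of dimension $n-d-1$. In particular both $\Lc(\Delta)$ and $\Lc(\Delta^c)$ have exactly $r$ vertices, and the correspondence $v_i\leftrightarrow(\text{the vertex of }\Lc(\Delta^c)\text{ attached to }F_i^c)$ is a bijection of vertex sets.

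It then remains to compare the adjacency conditions under this bijection, which reduces to a single counting identity. By inclusion--exclusion in $V$,
\[
|F_i^c\cap F_j^c|=|(F_i\cup F_j)^c|=n-|F_i\cup F_j|=n-2d+|F_i\cap F_j|,
\]
using $|F_i|=|F_j|=d$. Consequently $|F_i\cap F_j|=d-1$ holds if and only if $|F_i^c\cap F_j^c|=n-d-1$. The left-hand condition is precisely the edge condition defining $\Lc(\Delta)$, while the right-hand condition, being equality with $(n-d)-1$, is precisely the edge condition defining $\Lc(\Delta^c)$ (recall that the facets of $\Delta^c$ have size $n-d$). Thus $\{v_i,v_j\}$ is an edge of $\Lc(\Delta)$ if and only if the corresponding pair is an edge of $\Lc(\Delta^c)$, and the bijection above is a graph isomorphism.

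There is no real obstacle here: the entire content is the elementary identity $|F_i^c\cap F_j^c|=n-2d+|F_i\cap F_j|$, and the only point requiring a moment's care is bookkeeping the dimension of $\Delta^c$, so that the threshold $d-1$ for $\Delta$ is matched against the correct threshold $n-d-1$ for $\Delta^c$. The degenerate case $n=d$ (where $r=1$ and both line graphs consist of a single isolated vertex) is covered by the same argument.
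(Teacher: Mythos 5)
Your proof is correct and follows essentially the same route as the paper's: both rest on De Morgan's identity $F^c\cap G^c=(F\cup G)^c$ together with the purity bookkeeping ($|F_i|=d$, $|F_i^c|=n-d$, so the edge threshold $d-1$ for $\Delta$ corresponds to $n-d-1$ for $\Delta^c$). The only cosmetic difference is that you package both implications into the single identity $|F_i^c\cap F_j^c|=n-2d+|F_i\cap F_j|$, whereas the paper verifies the two directions separately.
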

	\begin{proof} It is clear that both graphs have the same number of vertices. Firstly we prove that each edge of $\Lc(\Delta)$ induces an edge in $\Lc(\Delta^c)$. Indeed, $\Delta$ is pure of dimension $d-1$ implies $\Delta^c$ is pure of dimension $n-d-1$. By using $F^c\cap G^c=(F\cup G)^c$ and $\#(F\cap G)=d-1$ we get $$\#(F^c\cap G^c)=\#((F\cup G)^c)=n-(d+1)=n-d-1.$$
		
		For the converse, one may note that $\left(F^c\right)^c=F$. Let $\{v_{F^c},v_{G^c}\}$ be an edge in $\Lc(\Delta^c)$. We have $$\#(F\cap G)=\#(\left(F^c\right)^c\cap\left(G^c\right)^c)=\#(\left(F^c\cup G^c\right)^c)=n-(n-d+1)=d-1.$$
	\end{proof}
	\begin{Remark}\rm One may note that the inequality $n>d+1$ from Proposition \ref{Deltac} is sharp. Indeed, it is clear that one should have $n\geq d$. If $n=d$ then $\Delta$ is a simplex, so it has only one facet. If $n=d+1$, we consider the following example: let $\Delta=\langle\{x_1,x_2,x_3\},\{x_2,x_3,x_4\}\rangle$. Then $\Delta^c=\langle\{x_4\},\{x_1\}\rangle$, so $\Lc(\Delta)$ has two vertices and one edge, while $\Lc(\Delta^c)$ has two isolated vertices.
	\end{Remark}
 It is easy to see that the line graph is not connected, in general, even if the simplicial complex $\Delta$ is connected. Therefore, we give a sufficient condition for the connectivity of the line graph of a simplicial complex.
\begin{Proposition}\label{connected}
	If $\Delta$ is a pure shellable simplicial complex, then $\Lc(\Delta)$ is connected.
\end{Proposition}
\begin{proof} Since $\Delta $ is shellable, there is an order of the facets $F_1,\ldots,F_r$ such that for all $1\leq j< i\leq r$ there is a vertex $x\in F_i\setminus F_j$ and some $k\in\{1,\ldots,i-1\}$ with $F_i\setminus F_k=\{x\}$. In particular $\#(F_i\cap F_k)=d-1$, therefore $\{v_{i},v_{k}\}$ is an edge in $\Lc(\Delta)$. Thus $\Lc(\Delta)$ is connected. 
\end{proof} 

In particular, we obtain an algebraic condition for the connectivity of the line graph:

\begin{Corollary}
	Let $\Delta$ be a pure simplicial complex of dimension $d-1$, on $n>d+1$ vertices and $S=\kk[x_1,\ldots,x_n]$. If $I(\Delta^c)\subseteq S$ has linear quotients, then $\Lc(\Delta)$ is connected.
\end{Corollary}
\begin{proof}
	The proof is straightforward. Indeed, $I_{\Delta^\vee}$ has linear quotients by Propositions~\ref{Aldualcomp}, therefore $\Delta$ is shellable according to  Theorem~\ref{shell}. The statement follows by Proposition ~\ref{connected}.
\end{proof}
One may note that the converse does not hold. There are simplicial complexes which are not even Cohen--Macaulay, but their line graph is connected, as the following example shows:

\begin{Example}\rm
	Let $\Delta$ be the simplicial complex on the vertex set $\{x_1,\ldots,x_7\}$ with the set of facets $\Fc(\Delta)=\{\{x_1,x_2,x_3\},\{x_2,x_3,x_4\},\{x_3,x_4,x_5\},\{x_4,x_5,x_6\},\{x_5,x_6,x_7\}\}$. Therefore $\Delta$ and its line graph are
	\[\]
	\begin{center}
		\begin{figure}[h]
			\includegraphics[height=2.5cm]{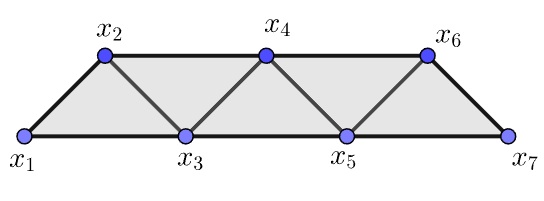}
			\caption{The simplicial complex $\Delta$}
				\end{figure}
		\end{center}
		\begin{center}
			\begin{figure}[h]
			\includegraphics[height=1.3cm]{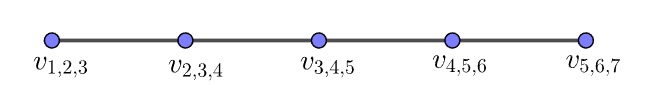}\caption{The line graph of $\Delta$}
		\end{figure}
	\end{center}
	Note that $\Lc(\Delta)$ is connected, but $\Delta$ is not even Cohen--Macaulay since the Stanley--Reisner ideal of its Alexander dual does not have a linear resolution as one may easy check with Singular, for instance \cite{DGPS}.
\end{Example}

The next problem naturally arise:
\begin{Problem}\rm
	Is there any characterization of those simplicial complexes whose line graph is connected?
\end{Problem}
	We consider the number of edges of the line graph. We recall that, for line graphs of graphs, the number of edges is known:
	\begin{Proposition}\cite[Proposition 7.6.2]{Vi}
		If $G$ is a graph with vertices $v_1,\ldots,v_n$ and edge set $E(G)$, then the number of edges of the line graph $L(G)$ is given by 
		\[\#(E(L(G)))=\sum\limits_{i=1}^n{\deg(v_i)\choose 2}=-\#(E(G))+\sum\limits_{i=1}^n\frac{\deg^2 v_i}{2}\]
	\end{Proposition}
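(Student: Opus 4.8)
The plan is to count the adjacencies of $L(G)$ directly, organizing them around the vertices of $G$. Recall that the vertices of $L(G)$ are the edges of $G$, and two of them are adjacent precisely when the corresponding edges of $G$ meet in a common vertex of $G$. First I would fix a vertex $x_i$ of $G$ and consider the $\deg(x_i)$ edges incident to it; any two of these edges meet at $x_i$ and hence yield an edge of $L(G)$, so they contribute $\binom{\deg(x_i)}{2}$ such pairs. Summing over all vertices produces the quantity $\sum_{i=1}^n \binom{\deg(x_i)}{2}$, and since every adjacency of $L(G)$ comes from two edges sharing at least one vertex, this sum is at least $|E(L(G))|$.

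The key point, and the only place where the hypothesis that $G$ is simple is used, is to verify that the count is exact, that is, that no adjacency of $L(G)$ is tallied more than once. An unordered pair $\{e,f\}$ of edges of $G$ is recorded at the vertex $x_i$ exactly when both $e$ and $f$ contain $x_i$. Since $G$ has no multiple edges, two distinct edges $e\neq f$ cannot share two vertices, for that would force $e=f$ as sets; hence distinct adjacent edges share exactly one vertex. Consequently each adjacency of $L(G)$ is produced at a unique vertex of $G$ and is counted exactly once in the summation, giving the first equality $|E(L(G))|=\sum_{i=1}^n \binom{\deg(x_i)}{2}$.

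For the second equality I would expand the binomial coefficient and invoke the handshake lemma. Writing $\binom{\deg(x_i)}{2}=\tfrac{1}{2}\bigl(\deg^2 x_i-\deg x_i\bigr)$ and summing over $i$ yields $\tfrac{1}{2}\sum_{i=1}^n \deg^2 x_i-\tfrac{1}{2}\sum_{i=1}^n \deg x_i$; since $\sum_{i=1}^n \deg x_i=2|E(G)|$, the second term equals $-|E(G)|$, which is exactly the claimed expression. I do not expect any genuine obstacle here: the entire content of the proof lies in the double-counting observation of the preceding paragraph, which in turn rests only on $G$ being a simple graph.
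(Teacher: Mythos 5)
Your proof is correct: the paper states this result only as a citation to Villarreal's book and gives no proof of its own, and your argument is precisely the standard one — the edges incident to each vertex $x_i$ form a clique contributing $\binom{\deg(x_i)}{2}$ adjacencies, simplicity of $G$ guarantees two distinct adjacent edges share exactly one vertex so nothing is double-counted, and the handshake lemma $\sum_i \deg x_i = 2|E(G)|$ converts the binomial sum into the second expression. There is nothing to add or compare against.
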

	
	We will determine the number of edges of the line graph of a pure simplicial complex. Let $\Delta$ be a pure simplicial complex of dimension $d-1$ with the facet set $\Fc(\Delta)=\{F_1,\ldots,F_r\}$ 
	For each $i$, let $$s_i=\#(\{F_j:j>i, \#(F_j\cap F_i)=d-1\}).$$
	
	\begin{Remark}\rm
		Note that $s_i$ is just the number of neighbors of $v_i$ which were not counted before.
	\end{Remark}
	\begin{Proposition}Under the above assumptions,
		$\#(E(\Lc(\Delta)))=\sum\limits_{i=1}^r s_i$
	\end{Proposition}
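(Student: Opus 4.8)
The plan is to count the edges of $\Lc(\Delta)$ by partitioning them according to the smaller of the two facet-indices involved. Recall that an edge of $\Lc(\Delta)$ is a pair $\{v_i,v_j\}$ with $i\neq j$ and $|F_i\cap F_j|=d-1$. Since the graph is simple, every such edge is counted exactly once if I always record it against its endpoint of smaller index. Formally, I would write $E(\Lc(\Delta))$ as the disjoint union over $i=1,\dots,r$ of the sets $A_i=\{\{v_i,v_j\}:\ j>i,\ |F_i\cap F_j|=d-1\}$, and observe that these sets are pairwise disjoint because any edge $\{v_i,v_j\}$ with $i<j$ lies in $A_i$ and in no other $A_k$.

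The next step is simply to identify $|A_i|$ with $s_i$. By definition $s_i=|\{F_j:\ j>i,\ |F_j\cap F_i|=d-1\}|$, and the map sending such an $F_j$ to the edge $\{v_i,v_j\}$ is a bijection onto $A_i$, so $|A_i|=s_i$. Summing the cardinalities over the disjoint decomposition then yields
\[
|E(\Lc(\Delta))|=\sum_{i=1}^{r}|A_i|=\sum_{i=1}^{r}s_i,
\]
which is the claimed formula.

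There is essentially no analytic obstacle here; the only point that deserves care is the bookkeeping that guarantees no edge is double-counted and none is omitted. That is exactly what the condition $j>i$ in the definition of $s_i$ arranges: for a fixed unordered pair $\{i,j\}$ with $i<j$, the pair contributes to $s_i$ but not to $s_j$, so each edge is tallied precisely once. I would therefore present the argument as the disjoint-union decomposition above, make the one-line bijection remark, and conclude by additivity of cardinality. The Remark preceding the statement, which interprets $s_i$ as the number of neighbors of $v_i$ not counted before, already foreshadows this decomposition, so the proof is short and combinatorial rather than computational.
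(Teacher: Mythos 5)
Your proof is correct and follows essentially the same approach as the paper's: counting each edge exactly once against its endpoint of smaller index, which is precisely the "neighbors not considered before" bookkeeping the paper describes. Your version merely makes the disjoint-union decomposition and the bijection $F_j \mapsto \{v_i,v_j\}$ explicit, which the paper leaves informal.
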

	\begin{proof} Let $F_1,\ldots,F_r$ be a labeling of the facets of $\Delta$. An edge of the graph $\Lc(\Delta)$ is given by a pair of facets $F_i, F_j$ such that $\#(F_i\cap F_j)=d-1$. Therefore, the number of edges induced by the facet $F_i$ is given by all its neighbors except the ones which were considered before (in order to skip the overlaps). 
	\end{proof}
	
	\begin{Corollary}
		$\sum\limits_{i=1}^r \deg(v_i)=2\sum\limits_{i=1}^r s_i$
	\end{Corollary}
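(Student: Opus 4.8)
The plan is to read this off from the preceding proposition by invoking the handshaking lemma. For any finite simple graph $G$ one has $\sum_{v\in V(G)}\deg(v)=2|E(G)|$, because each edge contributes exactly $1$ to the degree of each of its two endpoints and hence exactly $2$ to the total degree sum. Applying this identity to the graph $\Lc(\Delta)$, whose vertices are $v_1,\ldots,v_r$, yields $\sum_{i=1}^r\deg(v_i)=2|E(\Lc(\Delta))|$. Since the previous proposition already gives $|E(\Lc(\Delta))|=\sum_{i=1}^r s_i$, substituting produces the asserted equality at once.

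If one prefers a self-contained combinatorial argument, I would instead double count the edges of $\Lc(\Delta)$ directly. For each index $i$, split the neighbors of $v_i$ according to whether their index is smaller or larger than $i$, writing $\deg(v_i)=t_i+s_i$, where $t_i=|\{F_j:j<i,\ |F_i\cap F_j|=d-1\}|$ and $s_i$ is as defined above. Summing over $i$ gives $\sum_{i=1}^r\deg(v_i)=\sum_{i=1}^r t_i+\sum_{i=1}^r s_i$. Each edge $\{v_i,v_j\}$ with $i<j$ is then counted exactly once by $\sum_i s_i$ (through the term $s_i$) and exactly once by $\sum_i t_i$ (through the term $t_j$), so both sums equal $|E(\Lc(\Delta))|$; in particular $\sum_i t_i=\sum_i s_i$, and the claim follows.

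There is no real obstacle here: the statement is a formal consequence of the handshaking lemma once the edge count of the previous proposition is available, and the only point requiring a word of care is the bookkeeping showing that $s_i$ counts each edge exactly once from its lower-indexed endpoint, which is precisely the content of the remark preceding the edge-count proposition.
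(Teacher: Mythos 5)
Your first paragraph is exactly the paper's own proof: the paper derives the corollary from the handshaking identity $2|E(G)|=\sum_i\deg(x_i)$ (which it calls ``Euler's inequality'') together with the preceding proposition $|E(\Lc(\Delta))|=\sum_{i=1}^r s_i$. Your additional self-contained double-counting argument is a correct bonus, but the core approach matches the paper.
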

	\begin{proof}The equality follows from the Euler's inequality $2\#(E(G))=\sum\limits_{i=1}^n\deg(v_i)$ and the previous result.
	\end{proof}
The next result is just a reformulation  of \cite[Lemma 3.1]{BHS} and \cite[Remark 3.2]{BHS} in terms of our definition of line graphs of simplicial complexes (in order to not complicate the terminology, we will not recall their statements here).

\begin{Lemma}\label{d=2}\cite[Lemma 3.1]{BHS} Let $\Delta$ be a pure simplicial complex of dimension $d-1$ and $G=\Lc(\Delta)$. If $v_1,v_2$ are two vertices of $G$ such that $\d(v_1,v_2)=2$, then $\#(F_1\cap F_2)=d-2$.
\end{Lemma}
\begin{proof}
	The results holds for $p=1$ and $h=d$ in \cite[Lemma 3.1]{BHS} and \cite[Remark 3.2]{BHS}.
\end{proof}
We will often use the behaviour of the line graph on the deletion of vertices. Therefore, we describe it in the sequel.
\begin{Lemma}\label{deletion}
	Let $\Delta$ be a pure simplicial complex of dimension $d-1$ and $F$ a facet of $\Delta$. Let $\Delta'$ be the subcomplex of $\Delta$ with $\Fc(\Delta')=\Fc(\Delta)\setminus\{F\}$. Then $\Lc(\Delta')=\Lc(\Delta)\setminus\{v_F\}$, where $v_F$ is the corresponding vertex of the facet $F$.
\end{Lemma}
\begin{proof}
	The proof is straightforward since $\Lc(\Delta)\setminus\{v_F\}$ is the subgraph of $\Lc(\Delta)$ obtained by deleting the vertex $v_F$ and all the edges which are incident to $v_F$. Thus it is clear that both graphs $\Lc(\Delta')$ and $\Lc(\Delta)\setminus \{v_F\}$ have the same set of vertices. Moreover, it is easy to see that $E(\Lc(\Delta'))\subseteq E(\Lc(\Delta)\setminus\{v_F\})$. For the other inclusion, one has to note that in $\Lc(\Delta)\setminus\{v_F\}$ any edge is of the form $\{v_{F_i},v_{F_j}\}$ such that $\#(F_i\cap F_j)=d-1$ and $F_i\neq F$, $F_j\neq F$. Hence, they are also edges in $\Lc(\Delta')$.
\end{proof}
Note that the converse does not hold. There are graphs which are line graphs of some simplicial complex, but the graph obtained by adding a vertex is not a line graph, as the following result shows:
\begin{Proposition}\cite[Theorem 4.3]{BHS}\label{wheel} Let $W_n$ the graph which is a wheel with a central vertex $v$, joined to every other vertex $u_i$, $1\leq i\leq n-1$, of a cycle of length $n-1$. Thus, for any $k$, $k\geq 3$,
	\begin{itemize}
		\item[(i)] $W_{2k}$ is not the line graph of any pure simplicial complex.
		\item[(ii)] Any proper induced subgraph of $W_{2k}$ is the line graph of some pure simplicial complex. 
	\end{itemize}
\end{Proposition} 
It is also useful to consider the behaviour of line graphs with respect to induced subgraphs.
\begin{Proposition}\label{inducedsubgraph}
	Let $G$ be a finite simple graph and $G'$ an induced subgraph of $G$. If there is no pure simplicial complex $\Delta'$ such that $\Lc(\Delta')=G'$, then $G$ is not the line graph of any pure simplicial complex. 
\end{Proposition}
\begin{proof} Assume by contradiction that there is a pure simplicial complex such that $\Lc(\Delta)=G$. Let $V(G)\setminus V(G')=\{v_1,\ldots,v_r\}$ and $F_1,\ldots,F_r$ the corresponding facets. We consider the simplicial complexes $\Delta_1,\ldots, \Delta_r$ such that $\Fc(\Delta_1)=\Fc(\Delta)\setminus\{F_1\}$, and $\Fc(\Delta_i)=\Fc(\Delta_{i-1})\setminus\{F_i\}$ for all $2\leq i\leq r$. Then, by applying repeatedly Lemma \ref{deletion}, one gets \[\Lc(\Delta_r)=\Lc(\Delta_{r-1})\setminus\{v_r\}=\cdots=\Lc(\Delta)\setminus\{v_1,\ldots,v_r\}=G\setminus\{v_1,\ldots,v_r\}=G'\]a contradiction since $G'$ is not the line graph of any pure simplicial complex.
	\end{proof}
\begin{Proposition}\label{degclique}
	Let $G$ be a finite simple connected graph and $\Delta$ a pure simplicial complex of dimension $d-1$ such that $\Lc(\Delta)=G$. Let $v$ be a vertex of $G$. Then $v$ belongs to at most $\min\{\deg(v),d\}$ maximal cliques.
\end{Proposition}
\begin{proof}
	Let us assume that $\deg(v)=r$, $\mathcal{N}(v)=\{v_1,\ldots,v_r\}$, and $F_1,\ldots,F_r$ the corresponding facets. Let $F=\{x_1,\ldots,x_d\}$ be the corresponding facet of $v$. Then each $F_i$ is obtained from $F$ by removing a vertex and adding a new one. It is clear that one has at most $d$ vertices to remove. If $r\leq d$, the statement is clear. If $r>d$, we will prove that the minimum is $d$. Indeed, we assume for instance that there are two facets $F_i$ and $F_j$ such that $F\setminus F_i=F\setminus F_j=\{x_i\}$. Then $F_i=(F\setminus\{x_i\})\cup\{y_i\}$ and $F_j=(F\setminus\{x_i\})\cup\{y_j\}$ which implies that $\#(F_i\cap F_j)=d-1$ and $\{v_i,v_j\}\in E(G)$. Thus $\{v,v_i,v_j\}$ form a complete graph. The statement follows.
\end{proof}

Note that the converse implication does not hold. More precisely, there are graphs with the property that each vertex belongs to at most $D=\min\{\deg(v),d\}$ for any $d\geq2$, but there is no simplicial complex $\Delta$ of dimension $d-1$ such that $\Lc(\Delta)=G$ as the following example shows.

\begin{Example}\label{ex1k23}\rm
	Let $G=\mathcal{K}_{2,3}$ with the edges $\{u_i,v_j\}$, $1\leq i\leq 2$, $1\leq j\leq 3$. 
	
		\begin{center}
		\begin{figure}[h]
			\includegraphics[height=3.5cm]{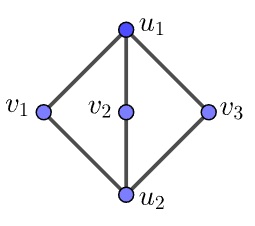}
			\caption{The complete bipartite graph $\mathcal{K}_{2,3}$}
		\end{figure}
	\end{center} 
	Each edge form a maximal clique. Thus $u_i$ is contained in $3$ maximal cliques, $1\leq i\leq 2$ and $v_j$ is contained in two maximal cliques, $1\leq j\leq 3$, but there is not any pure simplicial complex $\Delta$ such that $\Lc(\Delta)=G$, by \cite[Remark 4.3]{BHS}. 
\end{Example}

We end up this section with a result which extends the results obtained in the next sections to higher dimensions.

\begin{Proposition}
	Let $G$ be a finite simple graph, $\Delta=\langle F_1,\ldots,F_r\rangle$ a pure $(d-1)$-dimensional simplicial complex such that $\Lc(\Delta)=G$. Let $\Gamma=\Delta*\langle\{y_1,\ldots,y_{N}\}\rangle$, where $V(\Delta)\cap\{y_1,\ldots,y_N\}=\emptyset$. Then $\Lc(\Gamma)=G$.
\end{Proposition}
\begin{proof}
	One may note that $\Gamma$ is a pure simplicial complex of dimension $d+N-1$ whose facets are $G_i=F_i\cup \{y_1,\ldots,y_N\}$, for all $1\leq i\leq r$. It is obvious that $\#(G_i\cap G_j)=d+N-1$ if and only if $\#(F_i\cap F_j)=d-1$ for all $1\leq i<j\leq r$. Therefore $\{G_i,G_j\}$ gives an edge in $\Lc(\Gamma)$ if and only if $\{F_i,F_j\}$ gives an edge in $\Lc(\Delta)$, for all $1\leq i<j\leq r$. The statement follows.
\end{proof}
\section{Cycles and complete graphs as line graphs of simplicial complexes}
In the sequel, we completely describe those pure simplicial complexes whose line graphs are  complete graphs. Moreover, we give classes pure simplicial complexes whose line graph is a cycle.
\begin{Proposition}\label{complete}
	Let $\Delta$ be a pure simplicial complex of dimension $d-1$ and $\Lc(\Delta)$ its line graph. Then $\Lc(\Delta)$ is a complete graph $\mathcal{K}_n$, for some $n\geq 3$, if and only if one of the following holds
	\begin{itemize}
		\item[(i)] $\Delta$ is of the form $\langle\{x_1\},\ldots,\{x_n\}\rangle*\Gamma$, where $\Gamma$ is a $(d-2)$-simplex such that $\{x_1,\ldots,x_{n}\}\cap V(\Gamma)=\emptyset$ 
		\item[(ii)] the facets of $\Delta$ are $(d-1)$-faces of a $d$-simplex. $(n\leq d+1)$.
	\end{itemize} 
\end{Proposition}
\begin{proof}``$\Leftarrow$"
	If $\Delta$ is one of the above simplicial complexes, then $\#(F\cap F')=d-1$ for any $F,F'\in\Fc(\Delta)$, hence $\Lc(\Delta)$ is a complete graph.
	
	``$\Rightarrow$" Assume that $\Lc(\Delta)$ is a complete graph $\mathcal{K}_n$, $n\geq3$. We have to prove that $\Delta$ has one of the forms given in (i) and (ii). We use induction on the number of vertices of $\Lc(\Delta)$ (or on the number of facets of $\Delta$, equivalently).
	
	For $n=3$, $\Lc(\Delta)$ is $C_3$, that is $\Delta=\langle F_1,F_2,F_3\rangle$ with $$\#(F_1\cap F_2)=\#(F_1\cap F_3)=\#(F_2\cap F_3)=d-1.$$ By the inclusion-exclusion principle we have $\#(F_1\cup F_2\cup F_3)-\#(F_1\cap F_2\cap F_3)=3$. Hence the facets of $\Delta$ should be in one of the cases:
	\begin{itemize}
		\item $F_1=\{x_1\}\cup V(\Gamma)$, $F_2=\{x_2\}\cup V(\Gamma)$,$F_3=\{x_3\}\cup V(\Gamma)$, where $\Gamma$ is a $(d-2)$-simplex such that $\{x_1,x_2,x_3\}\cap V(\Gamma)=\emptyset$.
		\item  $F_1= \{x_1,x_2\}\cup V(\Gamma)$, $F_2=\{x_1,x_3\}\cup V(\Gamma)$, $F_3=\{x_2,x_3\}\cup V(\Gamma)$, where $\Gamma$ is a $(d-3)$-simplex such that $x_1,x_2,x_3\notin V(\Gamma)$. In particular, the facets of $\Delta$ are $(d-1)$-faces of a $d$-simplex $\Gamma_1$ with $V(\Gamma_1)=V(\Gamma)\cup\{x_1,x_2,x_3\}$.
	\end{itemize} 
	
	Assume that the statement is true for $n\geq 3$ vertices and we prove it for $n+1$ vertices. Let  $\Fc(\Delta)=\{F_1,\ldots,F_{n+1}\}$ with $\Lc(\Delta)$ a complete graph, and we consider the simplicial complex $\Delta'=\Delta\setminus \{F_{n+1}\}$. By Lemma~\ref{deletion} we have $\Lc(\Delta')=\Lc(\Delta)\setminus\{v_{n+1}\}$, hence $\Lc(\Delta')$ is a complete graph with $n$ vertices. By the induction hypotheses, we have one of the following cases:
	
	\textbf{Case 1}: $\Delta'=\langle\{x_1\},\ldots,\{x_n\}\rangle*\Gamma$, where $\Gamma$ is a $(d-2)$-simplex such that $\{x_1,\ldots,x_{n}\}\cap V(\Gamma)=\emptyset$. We may assume that $F_i=\{x_i\}\cup V(\Gamma)$, for all $1\leq i\leq~ n$. Since $\#(F_{n+1}\cap F_1)=\#(F_{n+1}\cap F_2)=d-1$, and $x_1,x_2\notin V(\Gamma)$, we have that $V(\Gamma)\subset F_{n+1}$. Since $\#(V(\Gamma))=d-1$, we must have $F_{n+1}=\{x_{n+1}\}\cup V(\Gamma)$ for some $x_{n+1}\notin V(\Gamma)$. Therefore $\Delta=\langle\{x_1\},\ldots,\{x_{n+1}\}\rangle*\Gamma$, where $\Gamma$ is a $(d-2)$-simplex such that $\{x_1,\ldots,x_{n+1}\}\cap V(\Gamma)=\emptyset$.
	
	\textbf{Case 2}: The facets of $\Delta'$ are $(d-1)$-faces of a $d$-simplex. If we assume by contradiction that $n=d+1$ then one must have $\{x_1,\ldots,x_{d+1}\}\subset F_{n+1}$ since $\#(F_{n+1}\cap F_i)=d-1$ for all $i\in\{1,\ldots,n\}$, a contradiction. Hence we must have $n\leq d$. In this case we may assume, up to relabelling of the vertices of $\Gamma$, that $F_i=\{x_1,\ldots,\widehat{x}_i,\ldots, x_{d+1}\}$. Since $\Lc(\Delta)$ is a complete graph, we must have $\#(F_{n+1}\cap F_i)=d-1$ for all $1\leq i\leq n$. Assume by contradiction that $F_{n+1}$ is not a $(d-1)$-face of $\Gamma$. Hence there is a vertex $x\in F_{n+1}\setminus V(\Gamma)$. In particular $x\in F_{n+1}\setminus F_i$, for all $i$. One may easy note that this implies $F_{n+1}=\left(F_1\cup\left\{x\right\}\right)\setminus\{x_i\}=\left(F_2\cup\left\{x\right\}\right)\setminus\{x_j\}=\left(F_3\cup\left\{x\right\}\right)\setminus\{x_k\}$ for some $i,j,k\in\{1,\ldots,d+1\}$, $i\neq 1$, $j\neq 2$, $k\neq 3$. In particular $F_1\setminus\{x_i\}=F_2\setminus\{x_j\}=F_3\setminus\{x_k\}$. Taking into account that $F_1=\{x_2,\ldots,x_{d+1}\}$, $F_2=\{x_1,x_3,\ldots,x_{d+1}\}$, and $F_3=\{x_1, x_2,x_4,\ldots,x_{d+1}\}$, the equality $F_1\setminus\{x_i\}=F_2\setminus\{x_j\}$ gives $i=2$ and $j=1$, while the equality $F_1\setminus\{x_i\}=F_3\setminus\{x_k\}$ gives $i=3$ and $k=1$, a contradiction. Therefore $F_{n+1}$ is a $(d-1)$-face of a $d$-simplex.
\end{proof}

Next we will find classes of simplicial complexes whose line graph is a cycle. The description of cycles of length 3 can be easily obtain from the induction step in the proof of Proposition~\ref{complete} since $C_3$ can be also considered as a complete graph with $3$ vertices. Since its particular description will be used later, we formulate it in the next result.
\begin{Corollary}\label{C3}
	Let $\Delta$ be a pure simplicial complex of dimension $d-1$ with three facets. Then $\Lc(\Delta)$ is $C_3$ if and only if one of the following holds
	\begin{itemize}
		\item[a)]  $\Delta$ is of the form $\langle\{x_1\},\{x_2\},\{x_3\}\rangle*  \Gamma$, where $\Gamma$ is a $(d-2)$-simplex such that $x_1,x_2,x_3\notin V(\Gamma)$
		\item[b)] $\Delta$ is of the form $\langle \{x_1,x_2\},\{x_1,x_3\},\{x_2,x_3\}\rangle* \Gamma$, where $\Gamma$ is a $(d-3)$-simplex such that $x_1,x_2,x_3\notin V(\Gamma)$.
\end{itemize} \end{Corollary}

We consider now $C_r$, for $r\geq4$, as line graphs of some simplicial complex.
\begin{Proposition}\label{cycle}
	Let $\Delta$ be a pure simplicial complex of dimension $d-1$ and $\Lc(\Delta)$ its line graph. If one of the following holds:
	\begin{itemize}
		\item if $d<r-1$, the facets of $\Delta$ are the $(d-1)$-paths of the cycle of length $r$.
		\item if $d\geq r-1$ the facets of $\Delta$ are the $(r-3)$-paths of the cycle of length $r$ union a set $H$ of cardinality $d-r+2$. 
	\end{itemize}then $\Lc(\Delta)$ is a cycle of length $r\geq4$. 
\end{Proposition}

\begin{proof} It is clear by the shape of the facets that every vertex in $\Lc(\Delta)$ has exactly two neighbours.
\end{proof}

Note that this is not a complete characterizations. There are $(d-1)$-simplicial complexes whose line graph is a cycle but the set of vertices does not verify any of the conditions from Proposition \ref{cycle}, as the following example shows.

\begin{Example}\rm Let $\Delta$ be the simplicial complex with the set of facets $$\Fc(\Delta)=\{\{x_1,x_2,x_3\},\{x_2,x_3,x_4\},\{x_3,x_4,x_5\}, \{x_1,x_4,x_5\}, \{x_1,x_4,x_6\}, \{x_1,x_2,x_6\}\}.$$ It is easy to see that $\Lc(\Delta)$ is a cycle of length $6$, but the facets of $\Delta$ are not 2-paths of the cycle $C_6$ with the set of vertices $\{x_1,\ldots,x_6\}$ (arranged in any order).
\end{Example}

\section{Complete bipartite graphs and line graphs of simplicial complexes}

According to \cite[Remark 4.3]{BHS}, it is clear that not all the complete bipartite graphs are line graphs of some simplicial complex. We will characterize all the complete bipartite graphs which have this property. Moreover, we describe all the pure simplicial complexes whose line graphs are complete bipartite.

We start by considering star graphs which can be viewed as complete bipartite graphs of the form $\mathcal{K}_{1,n}$, $n\geq 2$.
\begin{Proposition}\label{1,d+1}
	If $G$ is the line graph of a pure simplicial complex of dimension $d-1$, then $G$ does not contain $\mathcal{K}_{1,d+1}$ as an induced subgraph. 
\end{Proposition}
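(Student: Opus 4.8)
The plan is to argue by contradiction, translating the existence of an induced $\mathcal{K}_{1,d+1}$ into a configuration of facets and then deriving a forbidden adjacency by a counting argument. Suppose $G=\Lc(\Delta)$ contained $\mathcal{K}_{1,d+1}$ as an induced subgraph. Recalling that $\Delta$ is pure of dimension $d-1$, so that every facet has exactly $d$ vertices, the center of the star corresponds to a facet $F$ with $|F|=d$, and the $d+1$ leaves correspond to distinct facets $H_1,\ldots,H_{d+1}$, each adjacent to $F$ in $\Lc(\Delta)$ but pairwise non-adjacent (since $\mathcal{K}_{1,d+1}$ is an \emph{induced} subgraph, the leaves span no edges of $G$).

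First I would record the local shape of a neighbor of $F$. Adjacency in $\Lc(\Delta)$ means $|F\cap H_i|=d-1$, so, as $|F|=|H_i|=d$, each $H_i$ is obtained from $F$ by deleting a single vertex $a_i\in F$ and inserting a single new vertex $b_i\notin F$; that is, $H_i=(F\setminus\{a_i\})\cup\{b_i\}$, where $a_i$ is the unique element of $F\setminus H_i$.

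Next comes the pigeonhole step, which is the heart of the argument. The deleted vertices $a_1,\ldots,a_{d+1}$ all lie in $F$, a set of only $d$ elements, so two of them must coincide: there exist $i\neq j$ with $a_i=a_j=:a$. Then both $H_i$ and $H_j$ contain the $(d-1)$-set $F\setminus\{a\}$, so $H_i\cap H_j\supseteq F\setminus\{a\}$. Since $H_i$ and $H_j$ are distinct facets of the same size $d$, their added vertices must differ, $b_i\neq b_j$, and as neither lies in $F\setminus\{a\}$ we obtain $H_i\cap H_j=F\setminus\{a\}$, whence $|H_i\cap H_j|=d-1$. But then $H_i$ and $H_j$ are adjacent in $\Lc(\Delta)$, contradicting that the leaves of the induced star are pairwise non-adjacent.

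I expect no genuine obstacle beyond bookkeeping. The one point that must be handled with care is verifying that the two leaves sharing a deleted vertex cannot also share their added vertex (which would make them equal rather than adjacent), and this is precisely where distinctness of the leaf facets is used. I would also be careful to invoke the purity hypothesis $|F_i|=d$ explicitly, since it is what simultaneously forces the count of $d$ possible deletions and the equality $|H_i\cap H_j|=d-1$ at the end.
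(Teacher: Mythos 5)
Your proof is correct and takes essentially the same route as the paper's: both argue by contradiction and apply pigeonhole to the $d+1$ leaf facets' intersections with the center facet $F$ (equivalently, to your deleted vertices $a_i$), forcing two leaves to meet in a common $(d-1)$-subset of $F$ and hence be adjacent in $\Lc(\Delta)$, contradicting induced-ness. If anything, you are slightly more careful than the paper in checking that the two leaves sharing a deleted vertex have distinct added vertices $b_i\neq b_j$, so that they are genuinely adjacent rather than equal.
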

\begin{proof} Assume by contradiction that $\mathcal{K}_{1,d+1}$ is an induced subgraph of $G$. Then there is a vertex $v_i$ with $d+1$ neighbours. Since $F_i$ has $d$ elements, there are two facets $F_{j}$ and $F_{k}$ such that $F_j\cap F_i=F_k \cap F_i$ and $v_j$, $v_k$ are neighbors of $v_i$. Therefore $\#(F_j\cap F_k)=d-1$, hence $\{v_j,v_k\}$ is an edge in $\Lc(\Delta)$, a contradiction.
\end{proof}
We can describe the shape of pure simplicial complexes whose line graphs are star graphs:
\begin{Proposition}\label{k1n} Let $\Delta$ be a pure simplicial complex of dimension $d-1$ and $n\leq d$ an integer. The following are equivalent:
	\begin{itemize}
		\item[(i)] $\mathcal{K}_{1,n}$ is the line graph of $\Delta$.
		\item[(ii)] $\Fc(\Delta)=\{\{x_1,\ldots,x_d\}\}\cup\{\{x_1,\ldots,\widehat{x}_i,\ldots,x_d,x_{d+i}\}:1\leq i\leq n\}$. 
	\end{itemize} 
\end{Proposition}
\begin{proof}
	``(i)$\Rightarrow$(ii)" In order to construct a pure simplicial complex of dimension $d-1$ whose line graph is $\mathcal{K}_{1,n}$ we firstly note that $n\leq d$, by Proposition \ref{1,d+1}. Secondly, if we consider that $\{v_1,\ldots,v_{n+1}\}$ is the set of vertices of $\mathcal{K}_{1,n}$ and the edges are $\{v_1,v_i\}$, for all $i\in\{2,\ldots,n+1\}$, then $\#(F_1\cap F_i)=d-1$ for all $2\leq i\leq n+1$ and $\#(F_i\cap F_j)=d-2$ for all $2\leq i<j\leq n+1$ by Lemma \ref{d=2}, where $\Fc(\Delta)=\{F_1,\ldots,F_{n+1}\}$. Therefore, if $F_1=\{x_1,\ldots,x_d\}$, then, for instance, $F_2=\{x_2,\ldots,x_d,x_{d+1}\}$. In order to fulfill the conditions, $F_3=\{x_1,x_3,\ldots,x_{d},x_{d+2}\}$. We continue like this until we construct all the facets. Therefore, $\Delta$ has the vertex set $\{x_1,\ldots,x_{d+n}\}$ and the facets set $$\Fc(\Delta)=\{\{x_1,\ldots,x_d\}\}\cup\{\{x_1,\ldots,\widehat{x}_i,\ldots,x_d,x_{d+i}\}:1\leq i\leq n\}.$$
	
	``(ii)$\Rightarrow$(i)" It is easy to note that the $\Lc(\Delta)=\mathcal{K}_{1,n}$
\end{proof} 
 We consider now complete bipartite graphs with $m=2$:
\begin{Proposition}\label{complete-bipartite-2n}
	The complete bipartite graph $\mathcal{K}_{2,n}$ is not the line graph of any pure simplicial complex, for any $n\geq 3$.
\end{Proposition}
\begin{proof}
We use induction for proving the statement.
	
	If $n=3$ it is easy to check that $\mathcal{K}_{2,3}\neq\Lc(\Delta)$ for any pure simplicial complex $\Delta$ (see also \cite[Remark 4.3]{BHS}). If $n>3$, then $\mathcal{K}_{2,3}$ is an induced graph of $\mathcal{K}_{2,n}$. Therefore the statement follows by Proposition \ref{inducedsubgraph}.  
\end{proof} 
\begin{Proposition}\label{k22}
	Let $m,n>1$ be integers, $\mathcal{K}_{m,n}$ the complete bipartite graph and $\Delta$ a pure simplicial complex of dimension $d-1$. Then $\mathcal{K}_{m,n}=\Lc(\Delta)$ if and only if $m=n=2$.
In this case
$\Delta$ has the form $$\langle\{x_1,x_2\},\{x_2,x_3\},\{x_3,x_4\},\{x_4,x_1\}\rangle*\langle\{x_5,\ldots,x_{d+2}\}\rangle. $$
\end{Proposition}
\begin{proof}
	``$\Rightarrow$" It is easy to see that it is enough to study only the values of $m$ since a similar discussion holds for $n$. If $m=2$ it follows by Proposition \ref{complete-bipartite-2n} that $n=2$. For $m\geq 3$, we assume by contradiction that $\Lc(\Delta)=\mathcal{K}_{m,n}$. Since $\mathcal{K}_{3,2}$ is an induced subgraph of $\mathcal{K}_{m,n}$, we get a contradiction.
	
	``$\Leftarrow$" We assume that $m=n=2$, that is $\mathcal{K}_{2,2}=C_4$. We construct a pure simplicial complex of dimension $d-1$ whose line graph is $\mathcal{K}_{2,2}$. We consider that $\mathcal{K}_{2,2}=C_4$ has the vertex set $\{v_1,v_2,v_3,v_4\}$ with the edges $\{v_1,v_2\},\{v_2,v_3\},\,\{v_3,v_4\},\{v_4,v_1\}.$ Let $\Delta$ be the simplicial complex with the facet set $\Fc(\Delta)=\{F_1,F_2,F_3,F_4\}$. We assume that the facet $F_i$ correspond to the vertex $v_i$, $1\leq i\leq 4$ and that the facet $F_1$ is $F_1=\{x_1,\ldots,x_d\}$. Since $\d(v_1,v_2)=1$ and $\d(v_1,v_3)=2$ we must have $\#(F_1\cap F_2)=d-1$ and $\#(F_1\cap F_3)=d-2$ by Lemma \ref{d=2}. Thus $F_2=\{x_1,\ldots,\widehat{x}_i,\ldots,x_d,x_{d+1}\}$ and $F_3=\{x_1,\ldots,\widehat{x}_j,\ldots,\widehat{x}_k,\ldots,x_{d+2}\}$ for some  $1\leq i\leq d$ and $1\leq j\leq k\leq d+1$. Since $\#(F_1\cap F_3)=d-2$, we must have $x_{d+1}\neq x_{d+2}$. 
	
	We assume that $k=d+1$. Since $F_3\setminus F_2=\{x_i,x_{d+2}\}$ and $\#(F_2\cap F_3)=d-1$, we must have $i=j$, that is $F_3=\{x_1,\ldots,\widehat{x}_i,\ldots,x_d,x_{d+2}\}$. But this implies $\#(F_1\cap F_3)=d-1$, a contradiction. Therefore $k\leq d$.
	
	If $i\neq j,k$ we have $F_2\setminus F_3=\{x_j,x_{k}\}$ and $\#(F_2\cap F_3)=d-1$, a contradiction. We must have $i=j$ or
$i=k$. Without losing the generality, we assume $i=j$, so $F_3=\{x_1,\ldots,\widehat{x}_i,\ldots,\widehat{x}_k,\ldots,x_{d+1},x_{d+2}\}$.

We have to construct $F_4$. Since $\d(v_1,v_4)=1$, we have $F_4=\{x_1,\ldots,\widehat{x}_l,\ldots,x_d,y\}$, for some vertex $y$ and some $1\leq l\leq d$. We will prove that $l=k$ and $y=x_{d+2}$.

Firstly we assume that $y=x_{d+1}$, that is $F_4=\{x_1,\ldots,\widehat{x}_l,\ldots,x_d,x_{d+1}\}$. Since $F_4\cap F_2=\{x_1,\ldots,x_d,x_{d+1}\}\setminus\{x_i,x_l\}$ and $\d(v_2,v_4)=2$, we have $\#(F_4\cap F_2)=d-1$,that is $\d(v_2,v_4)=1$, a contradiction. Thus, $y\neq x_{d+1}$.

Now, $F_4\cap F_2=\{x_1,\ldots,x_d\}\setminus\{x_i,x_l\}$ and $\#(F_2\cap F_4)=d-2$ imply $i\neq l$. But $F_4\setminus F_3=\{x_i,x_k,y\}$ and $\#(F_3\cap F_4)=d-1$ imply $k=l$ and $y=x_{d+2}$, that is  $F_4=\{x_1,\ldots,\widehat{x}_k,\ldots,x_d,x_{d+2}\}$.

Therefore $\Delta$ has the vertex set $\{x_1,\ldots,x_{d+2}\}$ and $$\Delta=\langle\{x_i,x_k\},\{x_k,x_{d+1}\},\{x_{d+1},x_{d+2}\},\{x_i,x_{d+2}\}\rangle*\langle\{x_1,\ldots,\widehat{x}_i,\ldots,\widehat{x}_k,\ldots,x_{d}\}\}\rangle.$$
\end{proof}

We obtained a characterization of all pure simplicial complex of the simplicial complexes whose line graph is $C_4$.

\begin{Corollary}\label{C4}
	Let $\Delta$ be a pure simplicial complex of dimension $d-1$. Then $\Lc(\Delta)=C_4$ if and only if
 $\Delta$ has the form $$\langle\{x_1,x_2\},\{x_2,x_3\},\{x_3,x_4\},\{x_4,x_1\}\rangle*\langle\{x_5,\ldots,x_{d+2}\}\}\rangle .$$
\end{Corollary}

\begin{proof}
	``$\Rightarrow$" Follows by Proposition \ref{k22}.
	
	``$\Leftarrow$" By the shape of the facets it is easy to see that $\Lc(\Delta)=C_4$.
\end{proof}

By using the above result, we can characterize all complete bipartite graphs which are line graphs of some pure simplicial complex.
\begin{Theorem}\label{complete-bipartite-mn}
	Let $m,n\geq1$ be integers, $\mathcal{K}_{m,n}$ the complete bipartite graph and $\Delta$ a pure simplicial complex of dimension $d-1$. Then $\mathcal{K}_{m,n}=\Lc(\Delta)$ if and only if one of the following holds:
	\begin{itemize}
		\item[(i)] $m=1$ and $n\leq d$ $($or $n=1$ and $m\leq d)$. In this case $\Delta$ has the vertex set $\{x_1,\ldots,x_{d+n}\}$ and the set of facets  $$\Fc(\Delta)=\{\{x_1,\ldots,x_d\}\}\cup\{\{x_1,\ldots,\widehat{x}_i,\ldots,x_d,x_{d+i}\}:1\leq i\leq n\}.$$ \item[(ii)] $m=n=2.$
		In this case $\Delta$ has the form$$\langle\{x_1,x_2\},\{x_2,x_3\},\{x_3,x_4\},\{x_4,x_1\}\rangle*\langle\{x_5,\ldots,x_{d+2}\}\}\rangle. $$
	\end{itemize} 
\end{Theorem}
\begin{proof}
	The result follows by Propositions \ref{k22} and \ref{k1n} 
\end{proof}
 \section{A particular class of graphs}
 In graph theory, there is also a different generalization of line graph of a graph for hypergraphs. This generalization, also called the intersection graph, is defined as follows: given a hypergraph $\Hc$, the line graph of $\Hc$ has the vertex set given by the edges, and two vertices are adjacent if the intersection of the corresponding edges is nonempty. In order to not create confusion, we will refer to it as the \textit{intersection graph}.
 A particular class of hypergraphs which is of interest in this context is that of linear $d$-uniform hypergraphs which are $d$-uniform hypergraphs with the intersection of any two edges of cardinality at most one. In \cite{NRSS} there is constructed a class of forbidden induced subgraphs for intersection graphs of linear $d$-uniform hypergraphs.
 
  Let $G_1(t)$ be the class of graphs obtained by arranging $t+2$ copies of $\mathcal{K}_4-e$ (the complete graph of order $4$ less an edge
 $e$), in the form of a chain to get a graph with maximum degree less than or equal to $4$ and attaching two pendant edges at each of the two degree two vertices of the graph thus obtained, \cite[pp 159]{NRSS}.
 
 \begin{figure}[h]
 	\includegraphics[height=2cm]{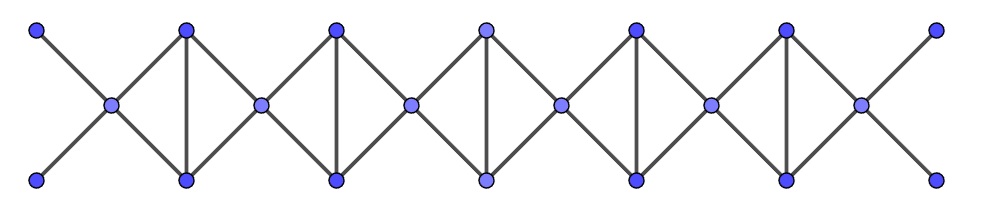}
 	\caption{The graph $G_1(3)$}
 \end{figure}
 
\newpage We will prove that this is not a forbidden class for the line graphs of simplicial complexes.
 
 \begin{Theorem}\label{G1t} Let $t\geq1$ be an integer and $G_1(t)$ the class of graphs described above. Then there is a $2$-simplicial complex $\Delta$ such that $\Lc(\Delta)=G_1(t)$.\end{Theorem}
 	\begin{proof}
 	We consider the graph $G_1(t)$ with the following labels of the vertices:
 	
 	\begin{figure}[h]
 		\includegraphics[height=4.5cm]{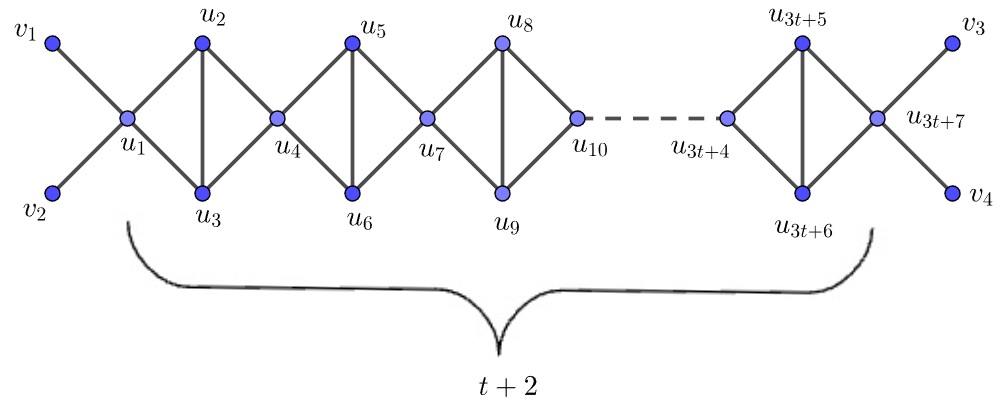}
 		\caption{The graph $G_1(t)$}
 	\end{figure}
 	
 	We start defining the facets of $\Delta$ by using Corollary \ref{C3}. Let us construct the facets of the cycle $\{u_1,u_2,u_3\}$ by using subcase (a) of Corollary \ref{C3}, namely $F_1=\{x_1,x_4,x_5\}$, $F_2=\{x_2,x_4,x_5\}$, $F_3=\{x_3,x_4,x_5\}$. Then the corresponding facets of $v_1$ and $v_2$ are $G_1=\{x_1,x_4,y_1\}$, $G_2=\{x_1,x_5,y_2\}$. Since $\{u_2,u_4\}$ and $\{u_3,u_4\}$ are edges, we must have $F_4=\{x_4,x_5,x_6\}$. Now we continue by using the second case of Corollary \ref{C3}: $F_5=\{x_4,x_6,x_7\}$, and $F_6=\{x_5,x_6,x_7\}$. Hence $F_7=\{x_6,x_7,x_8\}$. We continue like this: $F_8=\{x_6,x_8,x_9\}$, and $F_9=\{x_7,x_8,x_9\}$. Hence $F_{10}=\{x_8,x_9,x_{10}\}$. It is clear that we can use this method until we construct the corresponding facet of $u_{3t+7}$, let's say $F_{3t+7}=\{x_{\alpha_1},x_{\alpha_2},x_{\alpha_3}\}$. If we look at the construction of $F_4$, $F_7$, $F_{10}$, we note that $x_{\alpha_3}$ is a new vertex, so we can consider the corresponding facets of $v_3$ and $v_4$ as $G_3=\{x_{\alpha_1},x_{\alpha_3},y_3\}$, and $G_4=\{x_{\alpha_2},x_{\alpha_3},y_4\}$.
  We proved that, for the simplicial complex $\Delta$ with $\Fc(\Delta)=\{F_1,\ldots,F_{3t+7},G_1,G_2,G_3,G_4\}$ we have $\Lc(\Delta)=G_1(t)$.
 	\end{proof}

	\section{Graphs and line graphs of a simplicial complex}
Now we pay attention to those graphs which are line graphs of a pure simplicial complex of a fixed dimension. In this case, we have to recall that not every graph is the line graph of a graph. In our context, a graph is a simplicial complex of dimension 1 with the facets given by the edges of the graph. Therefore, knowing the dimension of the simplicial complex, one has to determine the properties that a graph should have in order to be the line graph of a simplicial complex. For line graphs of graphs there is the following characterization:
\begin{Theorem}\cite{B}\label{LG} The following statements are equivalent for a graph $G$.
	\begin{itemize}
		\item[(i)] $G$ is the line graph of some graph
		\item[(ii)] The edges of $G$ can be partitioned into complete subgraphs in such a way that no vertex belongs to more than two of the subgraphs.
		\item[(iii)] The graph $\mathcal{K}_{1,3}$ is not an induced subgraph of $G$; and if $abc$ and $bcd$ are distinct odd triangles, then $a$ and $d$ are adjacent (we recall that a triangle is \textit{odd} if there is a vertex of $G$ which is adjacent to an odd number of vertices of the triangle).
		\item[(iv)] None of the nine graphs given bellow is an induced subgraph of $G$ \begin{center}
			\begin{figure}[h]
				\includegraphics[height=8cm]{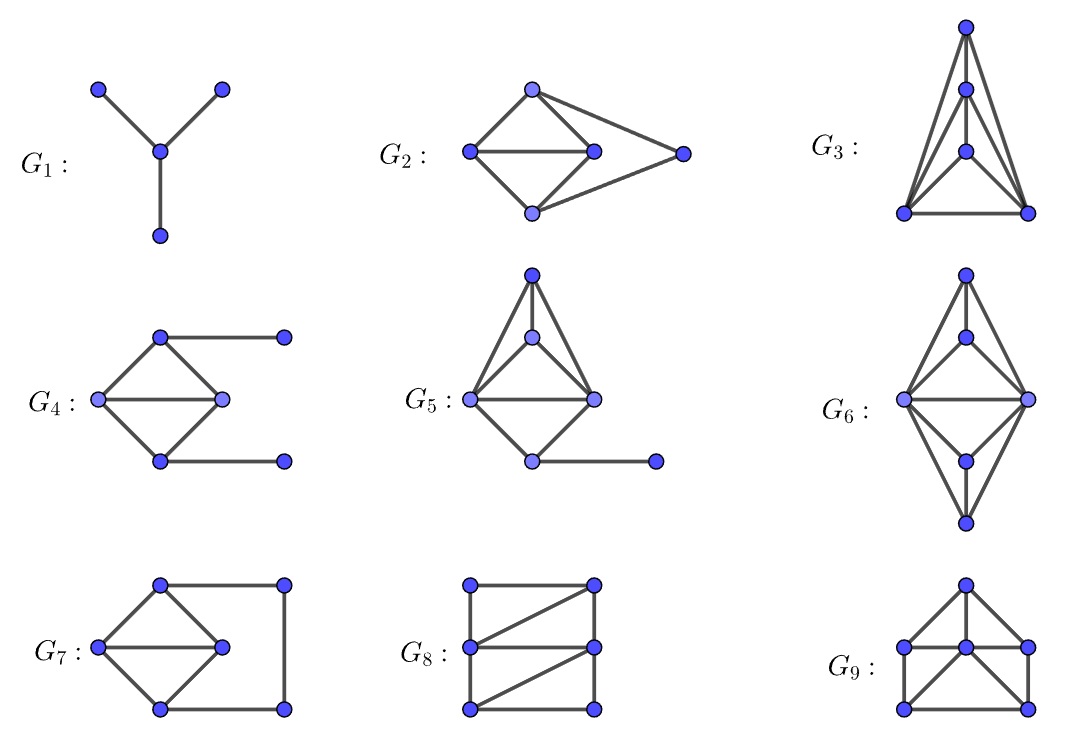}
				\caption{Forbidden induced subgraphs of the line graph of a graph}
			\end{figure}
		\end{center}
	\end{itemize}
\end{Theorem}

We can prove the following theorem which can be viewed as a ``generalization" of the implication (i)$\Rightarrow$(ii) in Theorem \ref{LG}.

\begin{Theorem}\label{charlinesc}Let $G$ be a finite simple graph and $\Delta$ a pure simplicial complex of dimension $d-1$.
If $G=\Lc(\Delta)$, then the edges of each connected component of $G$ can be partitioned into complete subgraphs in such a way that no vertex belongs to more than $d$ of the subgraphs. 
\end{Theorem}
\begin{proof}Without restricting the generality, we can assume that $G$ is connected. We construct a partition of the edges of $G$ into complete subgraphs, as follows: let $A_1$ be a maximal clique of $G$ with the largest number of vertices and we mark all the aedges from $A_1$. We choose a vertex of $A_1$ such that the set of its neighbours which are not in $A_1$ is maximal, and denote this vertex is $u_1$. Let $A_2$ be a maximal clique of $G$ which contains $u_1$, has the largest numbers of vertices and has the vertex set in $\{V(G)\setminus V(A_1)\}\cup\{u_1\}$. We mark all edges from $A_2$. If there are some others cliques which contains $u_1$ and the edges were not already been marked, we consider them now. Let's assume that there is no other clique containing $u_1$.  We continue to scan the vertices of $A_1$.  We assume that there is another vertex $u_2\in V(A_1)$ such that $\#(\mathcal{N}_G(u_2)\setminus V(A_1))$ is maximal. Let $A_3$ be a maximal clique of $G$ which contains $u_2$, has the largest number of vertices and $V(A_3)\subseteq\{V(G)\setminus V(A_1)\}\cup\{u_2\}$. Note that we allow all the edges which connect to the vertices already considered, but after each step, we mark the edges from the defined cliques. 
	We continue until we mark all the edges of $G$. Let's assume that for $E(G$) we obtained the partition $\{A_1,\ldots,A_s\}$, for some integer $s\geq1$.
	
	We have to prove that each vertex belongs to at most $d$ complete graphs $A_i$. By the construction and using Proposition \ref{degclique} the statement follows. Indeed, it is easy to see that Proposition \ref{degclique} says that any vertex belongs to at most $d$ maximal cliques (the number can be smaller). Moreover, we firstly we mark all the edges which are in the maximal cliques which contain $u_1$. Then we continue with the neighbours of $u_1$ and we do the same. But each maximal clique comes from a larger maximal clique which contains edge that were already marked in the above steps. So the number of maximal clique in the worst case remains the same.
\end{proof}
We exemplify the above construction:
\begin{Example}\rm\label{expart}
	Let $\Delta$ be the simplicial complex given by the set of all the $2$-faces of the simplicial complex $\Gamma=\langle\{x_1,x_2,x_3\}\rangle*\langle\{x_4\},\{x_5\},\{x_6\},\{x_7\}\rangle$. Hence, $$\Fc(\Delta)=\{F_1=\{x_1,x_2,x_3\},F_2=\{x_1,x_2,x_4\},F_3=\{x_1,x_3,x_4\},F_4=\{x_2,x_3,x_4\}$$
	$$F_5=\{x_1,x_2,x_5\},F_6=\{x_1,x_3,x_5\},F_7=\{x_2,x_3,x_5\},F_8=\{x_1,x_2,x_6\},$$ $$F_9=\{x_1,x_3,x_6\},F_{10}=\{x_2,x_3,x_6\}, F_{11}=\{x_1,x_2,x_7\},$$ $$F_{12}=\{x_1,x_3,x_7\},F_{13}=\{x_2,x_3,x_7\}\}.$$
	In $\Lc(\Delta)$ we have the following maximal cliques (we use $u_i$ for the facet $F_i$):
	
	\[\mbox{induced by} \langle\{x_1,x_2\}\rangle*\langle\{x_3\},\{x_4\},\{x_5\},\{x_6\},\{x_7\}\rangle:\ u_1,u_2,u_5,u_8,u_{11}\] 
	\[\mbox{induced by} \langle\{x_1,x_3\}\rangle*\langle\{x_2\},\{x_4\},\{x_5\},\{x_6\},\{x_7\}\rangle:\ u_1,u_3,u_6,u_9,u_{12}\] 
	\[\mbox{induced by} \langle\{x_2,x_3\}\rangle*\langle\{x_1\},\{x_4\},\{x_5\},\{x_6\},\{x_7\}\rangle:\ u_1,u_4,u_7,u_{10},u_{13}\] 
	\[\mbox{induced by the subsets of the set} \{x_1,x_2,x_3,x_4\}:\ u_1,u_2,u_3,u_4\] 
	\[\mbox{induced by the subsets of the set} \{x_1,x_2,x_3,x_5\}:\ u_1,u_5,u_6,u_7\]
	\[\mbox{induced by the subsets of the set} \{x_1,x_2,x_3,x_6\}:\ u_1,u_8,u_9,u_{10}\]
	\[\mbox{induced by the subsets of the set} \{x_1,x_2,x_3,x_7\}:\ u_1,u_{11},u_{12},u_{13}\]
	Note that there are not other edges.
	
	In order to construct the partition, we start with a maximal clique of largest size: $A_1=\{u_1,u_2,u_5,u_8,u_{11}\}$ and we mark all the edges from this clique. We have that $u_1$ is in some the maximal cliques, so we continue with it. Let $A_2=\{u_1,u_3,u_6,u_9,u_{12}\}$ and we mark all the edges. We continue with $u_1$ and we have $A_3=\{u_1,u_3,u_7,u_{10},u_{13}\}$ and we mark all the edges. Since all the edges from $u_1$ are marked, we continue with the vertices of $A_1$ and we choose $u_2$. Let $A_4=\{u_2,u_3,u_4\}$ and we mark all the edges from $A_4$. We continue with $u_5$ and we get $A_5=\{u_5,u_6,u_7\}$  and mark all the edges. We continue with $u_8$ and we get $A_6=\{u_8,u_9,u_{10}\}$ and mark all the edges. Finally, we continue with $u_{11}$ and we get $A_7=\{u_{11},u_{12},u_{13}\}$. Now all the edges are marked. 
	
	We see that $d=3$, $u_1$ is contained in three cliques: $A_1,A_2,A_3$ and all the other vertices are contained in two cliques.
	
	The figure bellow contains the line graph of $\Delta$ and the corresponding partition of edges.
	
	\begin{center}
		\begin{figure}[h]
			\includegraphics[height=6.5cm]{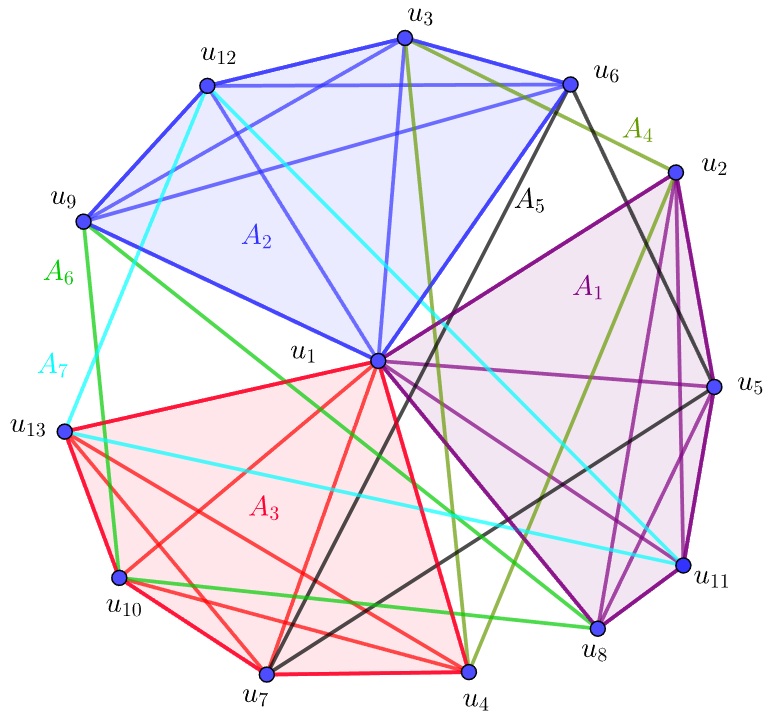}
			
		\end{figure}
	\end{center} 
\end{Example}
Note that the converse implication does not hold. More precisely, there are graphs $G$ for which the edges of each connected component of $G$ can be partitioned into complete subgraphs in such a way that no vertex belongs to more than $d$ of the subgraphs, but there is no simplicial complex $\Delta$ of dimension $d-1$ such that $\Lc(\Delta)=G$ as the following example shows.

\begin{Example}\rm\label{partitionK23}
	Let $G=\mathcal{K}_{2,3}$ with the edges $\{u_i,v_j\}$, $1\leq i\leq 2$, $1\leq j\leq 3$.
	\begin{center}
		\begin{figure}[h]
			\includegraphics[height=3cm]{k23.jpeg}
			\caption{The complete bipartite graph $\mathcal{K}_{2,3}$}
		\end{figure}
	\end{center} 

Each edge form a complete graph. Thus $u_i$ is contained in $3$ complete subgraphs, $1\leq i\leq 2$ and $v_j$ is contained in two maximal cliques, $1\leq j\leq 3$, but there is not any pure simplicial complex $\Delta$ such that $\Lc(\Delta)=G$, by \cite[Remark 4.3]{BHS} or Proposition \ref{complete-bipartite-2n}. 
\end{Example}

Examples shows that there are many classes of finite simple graphs for which the converse implication is true. In the sequel we construct such an example:

\begin{Theorem}\label{triangle-wheel} Let $G$ be the friendship graph (see the figure bellow) with $2n+1$ vertices $\{v,u_1,\ldots,u_{2n}\}$, for some $n\geq 2$. Then there is a partition of the edges of $G$ into complete subgraphs $A_i=\{v,u_{2i-1},u_{2i}\}\}$, $1\leq i\leq n$ such that each vertex belongs to no more than $d=n$ complete subgraphs and $G$ is the line graph of a pure simplicial complex of dimension $d-1$.

\begin{center}
	\begin{figure}[h]
		\includegraphics[height=5cm]{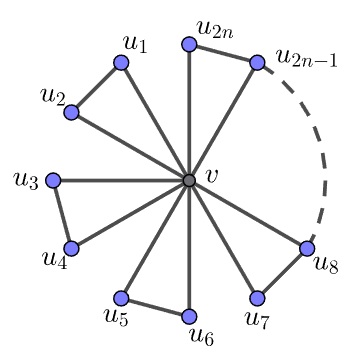}

\caption{The Friendship graph, $F_n$}	\end{figure}
\end{center}
	\end{Theorem}

\begin{proof}
	We will construct a simplicial complex $\Delta$ of dimension $d-1$ such that $\Lc(\Delta)=G$. Let's assume that to each complete subgraph $A_i$ one assigns a vertex $x_i$ of $\Delta$. Moreover, we consider as vertices of $\Delta$ all the vertices of $G$ which belong to exactly one $A_i$, that is all the vertices $u_1,\ldots,u_{2n}$. Let $F=\{x_1,\ldots,x_n\}$, $F_{2i-1}=\{u_{2i-1},x_1,\ldots,\widehat{x_i},\ldots,x_n\}$, and $F_{2i}=\{u_{2i},x_1,\ldots,\widehat{x_i},\ldots,x_n\}$. Then it is easy to see that the simplicial complex with the facet set $\Fc(\Delta)=\{F,F_1,\ldots,F_{2n}\}$ has dimension $d-1$ and $\Lc(\Delta)=G$.
\end{proof}

One may consider a slightly general result of the Theorem \ref{triangle-wheel} if one replaces all the triangles, which are in fact $\mathcal{K}_3$, with complete graphs or arbitrary dimensions (the generalized Dutch windmill graph). The proof is similar and we will skip it.

\begin{Theorem}
	Let $G$ be a generalized Dutch windmill graph, that is its edge set can be partitioned into maximal complete subgraphs $\mathcal{K}_{n_1},\ldots,\mathcal{K}_{n_d}$ such that $\bigcap\limits_{1\leq i\leq d}V(\mathcal{K}_{n_i})=\{v\}$. Then $G$ is the line graph of a pure simplicial complex of dimension $d-1$.
\end{Theorem}
	\section{Facet ideals and line graphs}
	
	For edge ideals of graphs one can describe the second Betti number in terms of the combinatorial structure of its line graph.
	\begin{Proposition}\cite[Proposition 2.1]{EV}\label{Betti}
		Let $I\subset S$ be the edge ideal of the graph $G$, let $V$ be the vertex set of $G$, and let $L(G)$ be the line graph of $G$. If \[\cdots\longrightarrow S^c(-4)\oplus S^{b}(-3)\longrightarrow S^q(-2)\longrightarrow S\longrightarrow S/I\longrightarrow0\]is the minimal graded resolution of $S/I$, then $b=\#(E(L(G)))-N_t$, where $N_t$ is the number of triangles of $G$ and $c$ is the number of unordered pairs of edges $\{f,g\}$ such that $f\cap g=\emptyset$ and $f$ and $g$ cannot be joined by an edge.
	\end{Proposition}
	We obtain a similar result for pure simplicial complexes by using the line graph.
	Let $\Delta$ be a pure simplicial complex of dimension $d-1$, $d\geq2$ on the vertex set $[n]=\{1,\ldots,n\}$, with the facet set $\mathcal{F}(\Delta)=\{F_1,\ldots,F_r\}$, $r\geq1$. Let $S=\kk[x_1,\ldots,x_n]$ be the polynomial ring in $n$ variables over a field $\kk$, and $I(\Delta)=(\mathbf{x}_{F_1}, \ldots,\mathbf{x}_{F_r})$ its facet ideal. 
	\begin{Theorem}\label{Bettisc}
		Let $I\subset S$ be the facet ideal of $\Delta$ and $\Lc(\Delta)$ its line graph. Let $N_t$ be the number of all the triangles in $\Lc(\Delta)$ which are disjoint (their vertex sets are disjoint) and don't arise from facets $F,G,H$ with $\#(F\cap G\cap H)=d-1$. Then $\beta_{2,d+1}(S/I)=\#(E(\Lc(\Delta)))-N_t$.
	\end{Theorem}
	\begin{proof}
		The proof is similar to \cite[Proposition 7.6.3]{Vi}. Let $F_1,\ldots,F_r$ be the facets of $\Delta$, and $I(\Delta)=(\mathbf{x}_{F_1}, \ldots,\mathbf{x}_{F_r})$ its facet ideal. Let $\psi: S^q(-d)\rightarrow S$, $\psi(e_i)=\mathbf{x}_{F_i}$, where $e_i$ is the $i$-th unit vector. Let $Z_1=\ker\psi$  and $Z_1'$ be the set of all the elements in $Z_1$ of degree $d+1$. We regard $\mathbf{x}_{F_i}$ as the vertices of $\Lc(\Delta)$.  Every edge $e=\{\mathbf{x}_{F_i},\mathbf{x}_{F_j}\}$ in $\Lc(\Delta)$ determines a syzygy $\syz(e)=ve_i-ue_j$, where $F_i=\{u\}\cup(F_i\cap F_j)$ and $F_j=\{v\}\cup(F_i\cap F_j)$ for some vertices $u,v\in V$. By \cite[Theorem 3.3.19]{Vi} the set of those syzygies generate $Z_1'$. Let $C_3=\{\mathbf{x}_{F_i},\mathbf{x}_{F_j},\mathbf{x}_{F_k}\}$ be a triangle in $\Lc(\Delta)$. One may note (or use Corollary \ref{C3}) that we must have either $\#(F_i\cap F_j\cap F_k)=d-1$ or $\#(F_i\cap F_j\cap F_k)=d-2$. 
		\begin{itemize}
			\item[$\bullet$] If $\#(F_i\cap F_j\cap F_k)=d-1$, then one must have $F_i=\{u\}\cup (F_i\cap F_j\cap F_k),$ $F_j=\{v\}\cup (F_i\cap F_j\cap F_k),$ and $F_k=\{w\}\cup (F_i\cap F_j\cap F_k)$ for some vertices $u,v,w$. Let $$\phi(C_3)=\{ve_i-ue_j,\ we_j-ve_k,\ we_i-ue_k\}.$$ It is easy to see that all the elements from this set are linearly independent.

		\item[$\bullet$] If $\#(F_i\cap F_j\cap F_k)=d-2$, one must have $F_i=\{u,v\}\cup (F_i\cap F_j\cap F_k),$ $F_j=\{v,w\}\cup (F_i\cap F_j\cap F_k),$ and $F_k=\{u,w\}\cup (F_i\cap F_j\cap F_k)$ for some vertices $u,v,w$. Let $\phi(C_3)=\{we_i-ue_j,\ ue_j-ve_k,\ we_i-ve_k\}.$
		In this case $we_i-ve_k=we_i-ue_j+ue_j-ve_k,$ hence the elements of $\phi(C_3)$  are linearly dependent.
	\end{itemize}
		One may note that if we choose two disjoint triangles $C_3$ and $C_3'$ of $\Lc(\Delta)$, we get $\phi(C_3)\cap\phi(C_3')=\emptyset$. Let $T$ be the set of all the triangles in $\Lc(\Delta)$ which are disjoint and don't arise from facets $F,G,H$ with $\#(F\cap G\cap H)=d-1$. From every triangle in $T$, choose an element $\rho(C_3)\in\phi(C_3)$. Then \[B=\{\syz(e)|e\in E(\Lc(\Delta))\}\setminus\{\rho(C_3):C_3\in T\}\]is a minimal generating set for $Z_1'$. The statement follows.
	\end{proof}
	\begin{Remark}\rm
		Note that the formula obtained does not depend on the characteristic of the ground field since the second Betti number of a Stanley--Reisner is independent of the ground field \cite{HT}.
	\end{Remark}
	\begin{Problem}\rm We cannot obtain a similar result for the other graded Betti numbers $\beta_{2,i}(S/I)$ in terms of $\Lc(\Delta)$, but examples shows that their description is encoded in the combinatorics of the other $k$-line graphs, with $k<d-1$. Therefore, taking into account Proposition~\ref{Betti}, is there a similar formula for $\beta_{2,d+2}(S/I)$ in terms of the combinatorics of the $k$-line graphs?
	\end{Problem}
	Note that we can translate the above result in terms of the Alexander duality:
	\begin{Corollary}\label{Bettiscaldual}
		Let $\Delta$ be a pure simplicial complex of dimension $d-1$ with $n>d+1$ vertices, $I\subset S=\kk[x_1,\ldots,x_n]$ the facet ideal of $\Delta$, and $\Lc(\Delta)$ its line graph. Let $N_t$ be the number of all the triangles in $\Lc(\Delta)$ which are disjoint (their vertex sets are disjoint) and don't arise from facets $F,G,H$ with $\#(F\cap G\cap H)=d-1$. Then $\beta_{2,n-d+1}(S/I_{\Delta^{\vee}})=\#(E(\Lc(\Delta)))-N_t$.
	\end{Corollary}
	\begin{proof}
		The proof is straightforward since Proposition \ref{Deltac} says that $\Lc(\Delta)=\Lc(\Delta^c)$ and Proposition \ref{Aldualcomp} says that $I_{\Delta^{\vee}}=I(\Delta^c)$. The statement follows by Theorem~\ref{Bettisc}.
	\end{proof}
	
	\section{Final conclusions and remarks}
	
	One may note that, in the case of line graphs of a simplicial complex, a characterization in terms of a finite list of forbidden induced subgraphs does not exist. (see Propositions \ref{wheel} and \ref{k22} for instance). The following problem naturally arise:
	
	\begin{Problem}\rm
		Are there some other classes of forbidden induced subgraphs for line graphs of simplicial complexes? 
	\end{Problem}

This automatically imply that a characterization of those graphs which are line graphs of some pure simplicial complexes as the one from Theorem \ref{LG} ``(i)$\Leftrightarrow$(iv)" is not possible in the same form. Still, the problem of generalising Theorem \ref{LG} can be seen from a different point of view, since in the case of line graphs of graphs, the simplicial complex has dimension one. For instance:

\begin{Problem}\rm
	Let $G$ be a finite simple graphs, $d\geq2$ an integer and $\Delta$ a pure $(d-1)$-simplicial complex such that $\Lc(\Delta)=G$. Is there a forbidden (finite) list of (classes) of induced subgraphs of $G$?
\end{Problem}

We expect that the forbidden classes of graphs depends on the fixed integer $d\geq 2$ (as seen in  Propositions \ref{wheel}(i) and \ref{1,d+1}). In particular, taking into account the approach from \cite[pp 159]{NRSS} and Theorem \ref{G1t}, the following problem is of interest:

\begin{Problem}\rm
	Let $G$ be a finite simple graphs, and $\Delta$ a pure $2$-simplicial complex such that $\Lc(\Delta)=G$. Is there a forbidden (finite) list of (classes) of induced subgraphs of $G$?
\end{Problem}

We saw in Theorem \ref{charlinesc} that an implication similar to Theorem \ref{LG} ``(i)$\Rightarrow$(ii)" holds, but it is not necessary. Nevertheless, there are classes of graphs for which the condition is sufficient. Examples shows that chordal graphs also have this property.

\begin{Problem}\rm
	Let $G$ be a finite simple chordal graph. Is there a partition of the edges of $G$ into complete subgraphs $A_1,\ldots,A_r $ such that each vertex belongs to no more than $d=r$ complete subgraphs and $G$ is the line graph of a pure simplicial complex of dimension $d-1$?
\end{Problem}

The next example shows such a construction.

	\begin{Example}\rm\label{Chordal}
		Let $G$ be the graph 
		
		\begin{figure}[h]
			\includegraphics[height=4cm]{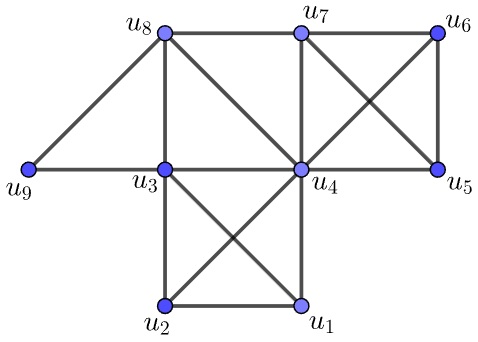}
		\end{figure}
		
		In order to construct the simplicial complex, we consider a partition of $E(G)$ as follows:  $A_1=\{u_1,u_2,u_3,u_4\}$, $A_2=\{u_4,u_5,u_6,u_7\}$, $A_3=\{u_3,u_8,u_9\}$, $A_4=\{u_4,u_8\}$ and $A_5=\{u_7,u_8\}$. Therefore, for $E(G)$ we obtained the partition $\{A_1,A_2,A_3,A_4,A_5\}$
		
	 We determine the dimension of the simplicial complex $\Delta$. Firstly, we define the following integers: $$d_1=\max\{\#(V(A_i)):1\leq i\leq 4\},$$ $$d_2=\max\{t\ : V(A_1)\cap\cdots\cap V(A_t)\neq\emptyset\},$$and $$d_3=\diam(G).$$
	 Let $d=\max\{d_1,d_2,d_3\}$ and $d-1$ be the dimension of $\Delta$.
		Note that $d_1=4$, $d_2=3$ since $A_1\cap A_2\cap A_4\neq \emptyset$, and $d_3=3$. Thus $d=\max\{d_1,d_2,d_3\}=4$, so $\Delta$ will be a 3-dimensional simplicial complex.

	 We construct the facets of $\Delta$, by using Theorem \ref{complete}. Note that each facet has $4$ vertices. We start by constructing the facets which correspond to the vertices of $A_1$. We consider \[\Delta_1=\langle\{F_1,F_2,F_3,F_4\}\rangle.\] where the facets are subsets of cardinality 4 of the 4-simplex $\langle\{x_1,x_2,x_3,x_4,x_5\}\rangle$: $F_1=\{x_1,x_2,x_3,x_4\}$, $F_2=\{x_1,x_2,x_3,x_5\}$, $F_3=\{x_1,x_3,x_4,x_5\}$, and $F_4=\{x_1,x_2,x_4,x_5\}$ which correspond to $u_1$, $u_2$, $u_3$, and $u_4$ respectively.
		
		Since $V(A_1)\cap V(A_2)=\{u_4\}$, we consider\[\Delta_2=\langle\{x_1\},\{y_1\},\{y_2\},\{y_3\}\rangle*\langle\{x_2,x_4,x_5\}\rangle.\] So the facets which correspond to $u_5$, $u_6$ and $u_7$ are $F_5=\{y_1,x_2,x_4,x_5\}$, $F_6=\{y_2,x_2,x_4,x_5\}$, and $F_7=\{y_3,x_2,x_4,x_5\}$.
		
		We continue with $u_4$, since $u_4\in V(A_4)$. On the other hand, $u_3,u_4,u_8$ form a cycle of length 3. Moreover  $u_4,u_7,u_8$ form a cycle of length 3, So $$\#(F_8\cap F_3)=\#(F_8\cap F_4)=\#(F_8\cap F_7)=3.$$ Since $F_3\cup F_4\cup F_7=\{x_1,x_2,x_3,x_4,x_5,y_3\}$, we consider $F_8=\{x_1,x_4,x_5,y_3\}$. Note that also $\{u_7,u_8\}$ is an edge, so we should have $\#(F_7\cap F_8)=3$ which is true since $F_7\cap F_8=\{y_3,x_4,x_5\}$.
		
		Therefore, we also obtained $\Delta_4=\langle F_4,F_8\rangle$ and
		$\Delta_5=\langle F_7,F_8\rangle$.
		
		For the $A_3$, we observe that $u_3,u_8,u_9$ form a cycle of length 3 and $ F_3\cup F_8=\{x_3,x_1,x_4,x_5,y_3\}$. So we choose $F_9=\{x_3,x_4,x_5,y_3\}$ (by using Corollary~\ref{C3} b).
		
		We obtained the simplicial complex $\Delta=\langle F_1,\ldots,F_9\rangle$ on the set of vertices $V=\{x_1,x_2,x_3,x_4,x_5,y_1,y_2,y_3\}$ which has the property that $\Lc(\Delta)=G$.
	\end{Example}

\section*{Acknowledgements}  
The author thanks Margaret Bayer for pointing out that line graphs of simplicial complexes have been studied in several other places as facet-ridge incident graphs. The author is grateful to the anonymous reviewer who proposed the graphs from Examples \ref{expart}  and \ref{Chordal} and made several suggestions.
	
\end{document}